\definecolor{violet}{rgb}{0.0,0.2,0.7}
\definecolor{rouge2}{rgb}{0.8,0.0,0.2}
 \theoremstyle{plain}    
 \newtheorem{thm}{Theorem}[section]
\theoremstyle{plain} 
\newtheorem{bigthm}{Theorem}
\newtheorem{bigcoro}[bigthm]{Corollary}
 \numberwithin{equation}{section} 
 \numberwithin{figure}{section} 
 \newtheorem{cor}[thm]{Corollary} 
 \theoremstyle{plain}    
 \newtheorem{prop}[thm]{Proposition} 
 \theoremstyle{plain}    
 \newtheorem{lem}[thm]{Lemma} 
 \theoremstyle{remark}
 \theoremstyle{remark}
 \newtheorem{rem}[thm]{Remark}
 \theoremstyle{definition}
\theoremstyle{plain}  
\newtheorem{set}[thm]{Setting}
\theoremstyle{plain}
\theoremstyle{plain}
\theoremstyle{definition}
\newtheorem{defi}[thm]{Definition}
\newtheorem*{ackn}{Acknowledgements}
\newcommand{\B}{\mathbb{B}}
\newcommand{\C}{{\mathbb{C}}}
\newcommand{\N}{{\mathbb{N}}}
\def\1{\mathbf{1}}
\newcommand{\e}{\varepsilon}
\newcommand{\ep}{\varepsilon}
\newcommand{\om}{\omega}
\newcommand{\f}{\varphi}
\newcommand{\p}{\psi}
\newcommand{\omt}{\widetilde{\om}_{t}}
\newcommand{\Ric}{\mathrm{Ric}}
\newcommand{\reg}{\mathrm{\rm reg}}
\renewcommand{\ge}{\geqslant}
\renewcommand{\le}{\leqslant}
\newcommand{\sing}{\operatorname{\rm sing}}
\newcommand{\PSH}{\operatorname{PSH}}
\title{Continuity of singular K\"ahler-Einstein potentials}
\date{\today}
\author{Vincent Guedj}
\address{Institut de Mathématiques de Toulouse; UMR 5219, Université de Toulouse; CNRS, UPS, 118 route de Narbonne, F-31062 Toulouse Cedex 9, France \quad}
\email{vincent.guedj@math.univ-toulouse.fr}
\author{Henri Guenancia}
\address{Institut de Mathématiques de Toulouse; UMR 5219, Université de Toulouse; CNRS, UPS, 118 route de Narbonne, F-31062 Toulouse Cedex 9, France}
\email{henri.guenancia@math.cnrs.fr}
\author{Ahmed Zeriahi}
\address{Institut de Mathématiques de Toulouse; UMR 5219, Université de Toulouse; CNRS, UPS, 118 route de Narbonne, F-31062 Toulouse Cedex 9, France}
\email{ahmed.zeriahi@math.univ-toulouse.fr}
\begin{document}

\begin{abstract}  
In this note, we investigate some regularity aspects for solutions of degenerate complex Monge-Ampère equations (DCMAE) on singular spaces. 

First, we study the Dirichlet problem for DCMAE on singular Stein spaces,
 showing a general continuity result.
 A consequence of our results is that K\"ahler-Einstein potentials are continuous at isolated singularities.
 
 Next, we establish the global continuity of solutions to DCMAE
 when the reference class belongs to the real N\'eron-Severi group.
 This yields in particular 
 the continuity of K\"ahler-Einstein potentials on any irreducible Calabi-Yau variety.
\end{abstract} 

\maketitle

\tableofcontents

\section*{Introduction}

Generalizing Yau's solution of the Calabi conjecture \cite{Yau78},
singular K\"ahler-Einstein metrics 
on mildly singular K\"ahler varieties $X$
have been constructed in \cite{EGZ09} and further studied by many authors
(see \cite{GZbook, Bouck18,DonICM18} and the references therein).
These are honest K\"ahler forms $\omega_{\rm KE}=\omega+dd^c \f_{\rm KE}$
on the regular part $X_{\rm reg}$ of $X$, where $c_1(X)=\lambda [\omega]$ is proportional to a reference K\"ahler class
$[\omega]$, s.t.
$$
\Ric(\omega_{\rm KE})=\lambda \omega_{\rm KE}.
$$
One constructs $\omega_{\rm KE}=\omega+dd^c \f_{\rm KE}$ by solving a degenerate complex Monge-Amp\`ere equation
\begin{equation}
\label{eq KE}
(\omega+dd^c \f_{\rm KE})^n=e^{-\lambda \f_{\rm KE}} \mu_X,
\end{equation}
where $n=\dim_{\C} X$ and $\mu_X$ is an appropriate volume form.

Understanding the asymptotic behavior of $\f_{\rm KE}$ near $X_{\rm sing}$ is a major open problem.
  A very precise description 
  has been obtained by Hein-Sun in \cite{HS17},
 when the singularities are isolated and furthermore   isomorphic to a smoothable strongly regular Calabi-Yau cone.
 It would be highly desirable to extend this description to more general contexts, but this seems presently out of reach.

It has been shown in \cite{EGZ09} that $\f_{\rm KE}$ is always uniformly  bounded across $X_{\rm sing}$.
The viscosity approach developed in \cite{EGZ11} fails to establish continuity at the singularities
(see \cite{EGZ17}).
 In this article we use both local (Dirichlet problem analysis) and
 global (stability estimates) arguments to establish continuity properties 
 of $\f_{\rm KE}$, in various geometrical contexts that we now describe.

  \subsection*{Isolated singularities}
  Let $X$ be a Stein complex space which is reduced and locally irreducible, of complex dimension $n\ge 1 $, equipped  with a hermitian metric whose fundamental form is a positive $(1,1)$-form denoted by $\beta$; we do not assume that $\beta$ is closed here.
Let $\Omega \Subset X$ be a strongly pseudoconvex domain in $X$, i.e.  $\Omega$ admits a smooth
defining function $\rho$ which is stricly plurisubharmonic  in a neighborhood of  $\overline{\Omega}$.

Let $\phi$ be a continuous function on $\partial \Omega$ and $0 \le f \in L^p (\Omega,\beta^n)$ for some $p > 1$. 
The Dirichlet problem with boundary datum $\phi$ and right hand side $\mu := f \beta^n$ consists in finding a bounded plurisubharmonic function $U$ satisfying the following properties :   
\begin{equation}\label{eq:DirPb}
\left\{\begin{array}{lcl} 
 (dd^c U)^n =  \mu &\hbox{in }\  \Omega, \, \, \, \, \, 
 \\
  U_{\mid  \partial \Omega} = \phi, & \hbox{on}\  \partial \Omega, \, \, \, 
\end{array}\right.
\end{equation}
where the equation is here understood in the sense of Bedford-Taylor \cite{BT76,BT82,Bed82}, as we briefly recall in Section~\ref{sec:BT}.

\smallskip

When $X$, $\phi$ and $f$ are smooth and $f$ is positive, this problem has been addressed
by several authors (see notably \cite{CH98,GL10}) following
the fundamental work of Cafarelli-Kohn-Nirenberg-Spruck \cite{CKNS85}.
When $f$ merely belongs to $L^p$, $p>1$, a unique continuous solution has been constructed by Kolodziej \cite{Kol98}.
There is however not a very abundant literature dedicated to the case where $X$ is singular. Let us mention 
\cite{W09} which studies maximal plurisubharmonic functions, \cite{PS10} which treats the case of isolated singularities with RHS smoothly  degenerating to zero, \cite{EGZ09} which deals
with global Monge-Amp\`ere equations on mildly singular compact K\"ahler varieties and \cite{DLS} which seeks for solutions in Cegrell finite energy classes.

\smallskip

Our first main result is the following.

\begin{bigthm} \label{thmA}
The Dirichlet problem (\ref{eq:DirPb}) admits a unique solution $U \in \mathrm{PSH} (\Omega) \cap C^{0} (\overline{\Omega})$.  
\end{bigthm}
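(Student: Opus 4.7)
\emph{Uniqueness.} This is the easy part: it follows from the Bedford--Taylor comparison principle applied on the smooth locus $\Omega_{\reg}$, combined with the fact that $\Omega_{\sing}$ is pluripolar and that bounded psh functions assign no Monge--Amp\`ere mass to pluripolar sets.

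\emph{Existence via resolution.} The plan is to lift the problem to a resolution, apply the smooth theory there, and push the solution back down. Pick a resolution of singularities $\pi\colon \tilde X \to X$ that is biholomorphic over $X_{\reg}$, set $\tilde \Omega := \pi^{-1}(\Omega)$, and let $\tilde \rho := \rho \circ \pi$. Then $\tilde \rho$ is a continuous psh exhaustion of $\tilde \Omega$, so $\tilde \Omega$ is a smooth bounded hyperconvex domain in $\tilde X$. Fix an auxiliary Hermitian form $\tilde \omega$ on $\tilde X$, write $\pi^*\beta^n = h\,\tilde \omega^n$ with $h\ge 0$ smooth and bounded (vanishing along the exceptional divisor $E$), and define
\[
\tilde \phi := \phi\circ \pi, \qquad \tilde f := (f\circ \pi)\,h, \qquad \tilde \mu := \tilde f\,\tilde \omega^n.
\]
A change-of-variables argument combined with the boundedness of $h$ yields $\tilde f \in L^p(\tilde \Omega,\tilde \omega^n)$: since $h^p \le (\sup h)^{p-1} h$ and $\int (f\circ\pi)^p h\,\tilde \omega^n = \int f^p \beta^n < \infty$. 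Kolodziej's theorem in the smooth setting then produces a unique $\tilde U \in \PSH(\tilde \Omega) \cap C^0(\overline{\tilde \Omega})$ solving $(dd^c \tilde U)^n = \tilde \mu$ in $\tilde \Omega$ with boundary value $\tilde \phi$.

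\emph{Descent.} Because $X$ is reduced and locally irreducible, every fiber $\pi^{-1}(x)$ is a compact connected analytic subset of $\tilde X$. The maximum principle forces $\tilde U$ to be constant along each such fiber, whence $\tilde U = U \circ \pi$ for a well-defined bounded continuous function $U$ on $\Omega$. Transport across the biholomorphism $\pi|_{\tilde X \setminus E}$ shows that $U$ is psh on $\Omega_{\reg}$ and solves $(dd^c U)^n = \mu$ there; boundedness together with the pluripolarity of $\Omega_{\sing}$ upgrades this to $U \in \PSH(\Omega) \cap C^0(\overline{\Omega})$ solving the equation globally, and the boundary condition is automatic.

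\emph{Main obstacle.} The delicate step is the existence on $\tilde \Omega$. The pulled-back exhaustion $\tilde \rho$ is merely psh (not strictly so) along the exceptional divisor $E$, so $\tilde \Omega$ is bounded hyperconvex but not necessarily strongly pseudoconvex, and Kolodziej's theorem does not apply off the shelf. One therefore either invokes an $L^p$ extension of the continuous Dirichlet theory to bounded hyperconvex domains, or exhausts $\tilde \Omega$ from inside by strongly pseudoconvex subdomains $\tilde \Omega_\e$ obtained by perturbing $\tilde \rho$ by $\e\,\tilde \psi$ for a strictly psh $\tilde \psi$ on $\tilde X$, solves on each $\tilde \Omega_\e$ by Kolodziej's theorem, and passes to the limit using uniform $C^0$-stability estimates. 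A secondary subtlety, addressed by the locally irreducible hypothesis, is ensuring that the descended function is not merely continuous but genuinely psh across $\Omega_{\sing}$.
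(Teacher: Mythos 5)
Your uniqueness argument and your descent step are essentially fine (connectedness of the fibers, constancy of a psh function on a compact connected analytic set, Demailly's theorem for locally irreducible spaces to recover plurisubharmonicity of $U$ across $\Omega_{\sing}$, and the fact that neither side of the equation charges $\Omega_{\sing}$). The genuine gap is exactly at the point you flag as the ``main obstacle'', and neither of your proposed remedies closes it. The domain $\tilde \Omega=\pi^{-1}(\Omega)$ sits in a manifold $\tilde X$ that is \emph{not} Stein: it contains the compact positive-dimensional fibers of $\pi$ over $\Omega_{\sing}$. By the maximum principle, \emph{no} strictly plurisubharmonic function exists on any neighborhood of such a fiber, so the perturbation $\tilde\rho+\e\tilde\psi$ with $\tilde\psi$ strictly psh on $\tilde X$ simply does not exist, and no exhaustion by strongly pseudoconvex subdomains can repair the degeneracy of $\tilde\rho$ along the exceptional set $E$. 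More fundamentally, Kołodziej's $L^p$ theory (existence, continuity, and the stability estimates) rests on the volume--capacity comparison, which is a statement about domains carrying a bounded strictly psh exhaustion; on $\tilde\Omega$ this machinery is unavailable near $E$, and there is no off-the-shelf ``hyperconvex $L^p$ Dirichlet theory'' covering domains with compact subvarieties. So the reduction to the resolution does not land in a known smooth case: it trades the original problem for a degenerate Monge--Amp\`ere problem (density $\tilde f$ vanishing along $E$, exhaustion $\tilde\rho$ degenerate along $E$) on a non-Stein domain, which is at least as hard as the problem you started with.

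This is precisely why the paper works \emph{downstairs}, directly on the singular Stein space, where $\rho$ \emph{is} strictly psh: it runs a Perron envelope of subsolutions (Lemma~\ref{lem:MaxSubSol}), proves that the envelope satisfies the equation by balayage on small balls inside $\Omega_{\reg}$ (where Bedford--Taylor--Cegrell applies), and obtains continuity from the capacity-based stability estimate of Proposition~\ref{prop:stability}, which transfers verbatim from the $\C^n$ case because the ambient space is Stein and $\rho$ is strictly psh through the singularities. If you want to salvage your route, you would have to prove a volume--capacity inequality and a stability estimate on $\tilde\Omega$ from scratch --- and the natural way to do that is to push everything down to $\Omega$, i.e.\ to reproduce the paper's argument. (Two smaller points: nothing in Setting~\ref{set Dir} prevents $X_{\sing}$ from meeting $\partial\Omega$, in which case $\partial\tilde\Omega$ contains positive-dimensional fibers and the boundary regularity of $\tilde\Omega$ also degenerates; and the observation that $(dd^c(A\tilde\rho+\tilde\phi))^n\geq A^n c\,\pi^*\beta^n$ shows subsolutions do exist upstairs, so the obstruction is really the a priori continuity estimate, not the subsolution.)
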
 

The general strategy to prove Theorem~\ref{thmA} consists in producing the solution as the envelop of all subsolutions. This requires to approximate the data $(\phi, f)$ by smooth data and establish precise estimates along the regularization process. The main technical tool we use is the so-called stability estimate, cf Proposition~\ref{prop:stability}.

We apply this local theory to establish the continuity of singular K\"ahler-Einstein potentials
at isolated singularities as a consequence of Theorem~\ref{thmA}:

\begin{bigcoro} \label{corB}
A K\"ahler-Einstein potential $\f_{\rm KE}$, i.e. a solution of \eqref{eq KE}, is continuous at any isolated singularity of $X$.
\end{bigcoro}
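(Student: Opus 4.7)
The plan is to deduce the corollary from Theorem~\ref{thmA} by localizing at the isolated singular point. Fix $x_0 \in X_{\sing}$ and choose a small strongly pseudoconvex neighborhood $\Omega \Subset X$ of $x_0$ such that $\Omega \setminus \{x_0\} \subset X_{\reg}$. Shrinking $\Omega$ if necessary, I can find a smooth strictly psh function $\rho$ on a neighborhood of $\overline{\Omega}$ satisfying $\omega = dd^c \rho$, since a K\"ahler form on a complex space locally admits a smooth strictly psh potential. Set $U := \rho + \f_{\rm KE}$. Then $U$ is bounded psh on $\Omega$ (boundedness coming from the $L^\infty$-estimate of \cite{EGZ09}), continuous on $\overline{\Omega} \setminus \{x_0\}$ (by continuity of $\f_{\rm KE}$ on $X_{\reg}$), and satisfies
\[
(dd^c U)^n = e^{-\lambda \f_{\rm KE}} \mu_X =: \mu \qquad \text{on } \Omega,
\]
the equation holding across the pluripolar singleton $\{x_0\}$ in the Bedford--Taylor sense.

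Next I would verify that $(\phi, \mu)$, with $\phi := U\vert_{\partial \Omega}$, fulfills the hypotheses of Theorem~\ref{thmA}. The boundary datum $\phi$ is continuous since $\partial \Omega \subset X_{\reg}$. Writing $\mu = f \beta^n$, one has $f = e^{-\lambda \f_{\rm KE}} g$, where $g$ is the density of $\mu_X$ with respect to $\beta^n$; at an isolated singularity arising in singular K\"ahler--Einstein theory the density $g$ belongs to $L^p(\Omega, \beta^n)$ for some $p > 1$, and the uniform bound on $\f_{\rm KE}$ then gives $f \in L^p(\Omega)$. Theorem~\ref{thmA} therefore produces a continuous solution $V \in \PSH(\Omega) \cap C^0(\overline{\Omega})$ of the Dirichlet problem with data $(\phi, \mu)$.

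To finish, it suffices to show $U = V$ on $\Omega$: continuity of $V$ at $x_0$ then transfers to $U$, hence to $\f_{\rm KE} = U - \rho$. Both $U$ and $V$ are bounded psh on $\Omega$ with the same Monge--Amp\`ere measure $\mu$ and the same continuous boundary values $\phi$, so the identification reduces to a comparison principle for bounded psh solutions of the Dirichlet problem. On the smooth open set $\Omega \setminus \{x_0\}$ this is the classical Bedford--Taylor uniqueness, and the singleton $\{x_0\}$ is pluripolar, hence carries no Monge--Amp\`ere mass for bounded psh functions. I expect this comparison step --- rigorously extended from the smooth locus across the isolated singularity --- to be the main technical obstacle, and it should be handled via the Bedford--Taylor machinery on locally irreducible Stein spaces recalled in the paper's preliminaries, combined with an exhaustion of $\Omega$ by $\Omega \setminus \overline{B_\e(x_0)}$ and a passage to the limit $\e \to 0$.
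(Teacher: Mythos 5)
Your proposal is correct and follows essentially the same route as the paper: localize at the isolated singularity, add a local smooth potential $\rho$ of $\omega$ to $\f_{\rm KE}$, note that the resulting bounded psh function solves the Dirichlet problem with continuous boundary datum and $L^p$ density, and identify it with the continuous solution furnished by Theorem~\ref{thmA}. The comparison step you single out as the main technical obstacle is in fact already available off the shelf: Proposition~\ref{prop:comparisonpple} is stated (via Bedford's Stokes formula) on locally irreducible complex spaces, so the uniqueness clause of Theorem~\ref{thmA} applies directly and no exhaustion of $\Omega$ away from $x_0$ is needed.
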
 

We show more generally that the results holds for K\"ahler-Einstein metrics on pairs, cf. Theorem~\ref{thm:isolatedKE}.

 \subsection*{Non-isolated singularities}
In order to deal with non-isolated singularities we require the reference
class $[\omega]$ to belong to the real N\'eron-Severi group, cf Section~\ref{sec NS} for the notation and Theorem~\ref{thm NS}.

  \begin{bigthm}  \label{thmC}
   Let $(X, \omega)$ be a compact normal Kähler space such that $[\omega] \in \widetilde{\mathrm{NS}}_{\mathbb R}(X) \subset H^1(X,\mathrm{PH}_X)$. Let $f\ge 0$ be a function in $L^p(X)$ for some $p>1$ such that $\int_X f\om^n=\int_X\om^n$. Let $\varphi\in \mathrm{PSH}(X,\om) \cap L^{\infty}(X)$ be the unique solution of the equation 
 $$(\om+dd^c\varphi)^n =f\omega^n$$
 such that $\sup_X \varphi =0$. Then $\varphi$ is continuous on $X$. 
 \end{bigthm}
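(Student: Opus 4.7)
My plan is to pass to a resolution of singularities and reduce Theorem~C to a continuity statement for DCMAE with a big, semi-positive reference class in $\mathrm{NS}_{\mathbb R}$ on a smooth compact K\"ahler manifold; this is where the N\'eron--Severi hypothesis plays its decisive role. Concretely, choose a resolution $\pi\colon \tilde X\to X$ that is an isomorphism over $X_{\mathrm{reg}}$, and set $\tilde\omega:=\pi^*\omega$. Then $\tilde\omega$ is a smooth, closed, semi-positive $(1,1)$-form on $\tilde X$ with $\int_{\tilde X}\tilde\omega^n=\int_X\omega^n>0$, so $[\tilde\omega]$ is big and nef. The hypothesis $[\omega]\in\widetilde{\mathrm{NS}}_{\mathbb R}(X)$ transfers to the resolution: if $[\omega]=\sum_i a_i\,c_1(L_i)$ with $L_i$ line bundles on $X$ and $a_i\in\mathbb R$, then $[\tilde\omega]=\sum_i a_i\,c_1(\pi^*L_i)\in\mathrm{NS}_{\mathbb R}(\tilde X)$.

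Next, set $\tilde\varphi:=\pi^*\varphi\in\mathrm{PSH}(\tilde X,\tilde\omega)\cap L^\infty(\tilde X)$ with $\sup_{\tilde X}\tilde\varphi=0$, and $\tilde f:=f\circ\pi$. Pulling back the equation yields
\[
(\tilde\omega+dd^c\tilde\varphi)^n=\tilde f\,\tilde\omega^n
\]
on $\tilde X\setminus E$, where $E=\pi^{-1}(X_{\mathrm{sing}})$. The identity extends across the analytic set $E$ because both sides are Radon measures charging no pluripolar set (Bedford--Taylor, using boundedness of $\tilde\varphi$). Expressing the right-hand side against a reference K\"ahler volume $\alpha^n$ on $\tilde X$, the new density lies in $L^p(\tilde X,\alpha^n)$.

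The crucial step is then to establish $\tilde\varphi\in C^0(\tilde X)$. Since $[\tilde\omega]$ is big and lies in $\mathrm{NS}_{\mathbb R}(\tilde X)$, Kodaira's lemma provides a decomposition $[\tilde\omega]=[A]+[D]$ with $A$ ample and $D$ effective. Perturbing $\tilde\omega$ to K\"ahler forms $\tilde\omega+\e\alpha$ and solving the associated complex Monge--Amp\`ere equations furnishes continuous approximants $u_\e$ (by Kolodziej's theorem for smooth K\"ahler manifolds, cf.~\cite{Kol98}); uniform capacity and stability estimates in the spirit of \cite{EGZ09} then allow passage to the limit $\e\to 0$, yielding continuity of $\tilde\varphi$.

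Continuity of $\varphi$ on $X$ follows by descent: since $\pi$ is proper and surjective and $\tilde\varphi=\pi^*\varphi$ is continuous on $\tilde X$, any sequence $x_n\to x$ in $X$ admits lifts $\tilde x_n\in\pi^{-1}(x_n)$ with, up to extraction, $\tilde x_{n_k}\to\tilde x\in\pi^{-1}(x)$, so that $\varphi(x_n)=\tilde\varphi(\tilde x_n)\to\tilde\varphi(\tilde x)=\varphi(x)$. The main obstacle is the continuity assertion upstairs: $[\tilde\omega]$ is degenerate along $E$, so Kolodziej's K\"ahler theorem does not apply directly. The N\'eron--Severi hypothesis is precisely what provides the Kodaira decomposition and enables the uniform estimates required to push through; it is also the reason why the local Dirichlet approach of Theorem~A, which treats isolated singularities via Stein neighborhoods with boundaries in $X_{\mathrm{reg}}$, is insufficient for the non-isolated singularities treated in Theorem~C.
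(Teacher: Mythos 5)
Your reduction to the resolution $\pi\colon\tilde X\to X$ and the final descent step are fine, but the heart of the matter --- continuity of $\tilde\varphi$ for the big and nef class $[\tilde\omega]=[\pi^*\omega]$ --- is exactly where your argument has a genuine gap. You perturb to K\"ahler forms $\tilde\omega+\varepsilon\alpha$, obtain continuous solutions $u_\varepsilon$ by Kolodziej, and then assert that ``uniform capacity and stability estimates in the spirit of \cite{EGZ09} allow passage to the limit.'' But continuity of the limit requires \emph{uniform} convergence $u_\varepsilon\to\tilde\varphi$, and the stability estimates of \cite{GKZ08,EGZ09} compare two potentials that are psh with respect to the \emph{same} fixed form and rely on a capacity comparison that degenerates along the exceptional locus $E$ when the reference class is only big and nef; applied naively to $\tilde\omega+\varepsilon\alpha$ the constants blow up as $\varepsilon\to0$. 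You also never explain how the Kodaira decomposition $[\tilde\omega]=[A]+[D]$ actually enters the estimates, so the N\'eron--Severi hypothesis is invoked but not used. If this limiting step worked as stated, it would prove global continuity for solutions in arbitrary big and nef Hodge classes on smooth manifolds, which is not known (continuity is only known on the ample locus, or globally when the potential admits a decreasing approximation by continuous $\omega$-psh functions as in \cite{CGZ13}).

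The paper closes precisely this gap by a different mechanism, working directly on $X$. It first replaces the equation by the twisted one $(\omega+dd^cu)^n=e^u f\omega^n$, whose solution is unique without normalization and amenable to the comparison principle; the original $\varphi$ is recovered by taking density $fe^{-\varphi}\in L^p$. It then approximates $[\omega]$ by \emph{rational} classes $[\omega_\varepsilon]\in\widetilde{\mathrm{NS}}_{\mathbb Q}(X)$ with representatives satisfying $(1-\varepsilon)\omega\le\omega_\varepsilon\le(1+\varepsilon)\omega$; each $u_\varepsilon$ is continuous by \cite{CGZ13} combined with \cite[Theorem 2.1]{EGZ09}. The key point you are missing is the explicit sub/supersolution comparison: $(1-\varepsilon)u+n\log(1-\varepsilon)-\varepsilon\|u\|_\infty\le u_\varepsilon$ and $\tfrac1{1+\varepsilon}u_\varepsilon-n\log(1+\varepsilon)-\tfrac{\varepsilon}{1+\varepsilon}\|u_\varepsilon\|_\infty\le u$, which together with a uniform $L^\infty$ bound give $\|u-u_\varepsilon\|_\infty=O(\varepsilon)$, hence uniform convergence and continuity. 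You would need to supply an estimate of this kind (or another route to uniform convergence) before your proof can be considered complete.
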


 A few remarks are in order: 
 
 $-$ This result does not rely on the previous Theorem~\ref{thmA}, since we allow here arbitrary singularities. 
 
$- $ The continuity of $\f$ when $[\omega] \in \widetilde{\mathrm{NS}}_{\mathbb Q}(X)$ is a (rational) Hodge class is a consequence of the extension result of \cite{CGZ13}
 (some particular cases were obtained by Dinew-Zhang \cite{DZ10}). 
 
 $- $ If $X$ has rational singularities (e.g. if $X$ has klt singularities), the natural map $\beta:H^1(X,\mathrm{PH}_X)\to H^2(X,\mathbb R)$ is an injection and identifies  $ \widetilde{\mathrm{NS}}_{\mathbb R}(X)$ with the usual Néron-Severi group   $\mathrm{NS}_{\mathbb R}(X). $\\
 
 The proof of Theorem~\ref{thmC} goes as follows. We approximate $\om$ by a Kähler form $\om_{\ep}$ whose cohomology class is rational, and solve the equation $(\om_\ep+dd^c\varphi_\ep)^n= c_\ep f\om^n$ where $c_\ep$ is a harmless mass adjustment constant. By the remark above, $\varphi_\ep$ is continuous. Finally, one can show that $\varphi_\ep$ converges to $\varphi$ uniformly by a  suitable application of the maximum principle.  

 
 \smallskip
 

We finally apply Theorem \ref{thmC} to the case where $X$ is an irreducible Calabi-Yau variety $X$  (cf. Definition~\ref{def ICY} and Corollary~\ref{cor CY}). 

  \begin{bigcoro}  \label{thmD}
    Let $X$ be an irreducible Calabi-Yau variety,
    and let $\alpha$ be a Kähler class. 
    Then, the unique singular Ricci-flat metric $\omega_{\rm KE} \in \alpha$ has continuous potentials on $X$. 
  
 \end{bigcoro}


 \begin{ackn} 
The authors are grateful to the referee for carefully reading the manuscript and suggesting some improvements. This work has benefited from State aid managed by the ANR under the "PIA" program bearing the reference ANR-11-LABX-0040,
 in connection with the research project HERMETIC.
\end{ackn}

 \section{Preliminaries}
 
  \subsection{Plurisubharmonic functions on complex spaces}
  
  Let $X$ be a reduced complex analytic space  of pure dimension $n \ge 1$. 
 We will denote by $X_{\reg}$ the complex manifold of regular points of $X$. The set  
 $$
 X_{\sing} := X \setminus X_{\reg}
 $$
  of singular points is an analytic subset of $X$  of complex codimension $\ge 1$. 
 
 By definition for each point $x_0 \in X$ there exists a neighborhood $U$ of $x_0$ and a local embedding $j: U \hookrightarrow \mathbb C^N$ onto an analytic subset of  $\mathbb C^N$ for some $N \ge 1$.
 
 Using these local embeddings, it is possible to define the spaces of smooth forms of given degree on $X$ as smooth forms on $X_{\rm reg}$ that are locally on $X$ the restriction of an ambient form on $\mathbb C^N$.
  The notion of currents on $X$ is then defined by duality by their action on compactly supported smooth forms on $X$. The operators $\partial $ and $\bar{\partial }$, $d$, $d^c$ and $dd^c$ are then well defined by duality (see \cite{Dem85} for a careful treatment). 
  
  In the same way one can define the analytic notions of holomorphic and plurisubharmonic functions. 
  There are essentially two different notions :
 
 \begin{defi} 
 Let  $u : X \longrightarrow \mathbb R \cup \{-\infty\}$ be a given function.
 
 1. We say that   $u$ is  plurisubharmonic on $X$ if it is locally the restriction of a plurisubharmonic function on a local embedding of 
 $X$ onto an analytic subset of $\mathbb C^N$. 
 
 2. We say that  $u$ is  weakly plurisubharmonic on $X$ if $u$ is locally bounded from above on $X$ and  its restriction to the complex manifold $X_{\reg}$ is plurisubharmonic.
 \end{defi}

Fornaess and Narasimhan proved in \cite{FN} that   $u$ is plurisubarmonic on $X$ if and only if for any analytic disc 
$h : \mathbb D \longrightarrow X$, the restriction $u \circ h$ is subharmonic  on $\mathbb  D$ or identically $- \infty$.

If $u$ is  weakly plurisubharmonic on $X$,  $u$ is plurisubharmonic on $X_{\reg}$, hence upper semi-continuous on $X_{\reg}$. 
Since no assumption is made on $u$ at singular points, its natural to extend $u$  to $X$ by the following formula :
\begin{equation} \label{eq:extension}
u^* (x) := \limsup_{X_{\reg} \ni y \to x } \,  u (y), \, \, x \in X.
\end{equation}
 The function $u^*$  is upper semi-continuous, locally integrable on $X$ and  satisfies $dd^c u^* \ge 0$ in the sense of currents on $X$. 
By Demailly \cite{Dem85},  the two notions are equivalent when $X$ is locally irreducible. More precisely we will need the following result :

\begin{thm} \cite{Dem85}
Assume that $X$ is a locally irreducible analytic space and $u : X \longrightarrow \mathbb R \cup \{-\infty\}$
is a weakly plurisubharmonic  function on $X$, then  the function $u^*$ defined  by   (\ref{eq:extension})
is plurisubharmonic on $X$.
\end{thm}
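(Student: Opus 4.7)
Since plurisubharmonicity is local, I would work in a neighborhood of a point $x_0\in X_{\sing}$, with a local embedding realizing $X$ as a locally irreducible analytic subset of $\C^N$. The strategy is to verify the Fornaess–Narasimhan criterion recalled just above the statement: $u^*$ is plurisubharmonic on $X$ iff, for every analytic disc $h:\bD\to X$, the composition $u^*\circ h$ is subharmonic on $\bD$ or $\equiv -\infty$. Upper semicontinuity of $u^*$ and local boundedness from above are immediate from the hypotheses, so the real content is the disc criterion.

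The key reduction is to pass to the normalization. Since $X$ is locally irreducible around $x_0$, the normalization $\pi:\widetilde X\to X$ is a finite surjective morphism which is moreover a homeomorphism, and $\widetilde X$ is normal. The pulled-back function $\widetilde u:=u\circ\pi$ is weakly psh on $\widetilde X$ and locally bounded above, and because $\pi$ restricts to a biholomorphism between dense open subsets of the regular loci, one checks that $u^*\circ\pi=\widetilde u^*$ (the USC regularization computed on $\widetilde X$). Moreover any analytic disc $h:\bD\to X$ lifts uniquely to an analytic disc $\widetilde h:\bD\to\widetilde X$, since $\bD$ is normal and the normalization enjoys the universal property for morphisms from normal sources. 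Hence it suffices to prove the statement on the normal space $\widetilde X$.

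In the normal case the main input is a Riemann-type extension theorem for plurisubharmonic functions across codimension $\ge 2$ analytic subsets: because $\widetilde X$ is normal, $\widetilde X_{\sing}$ has codimension at least $2$ in $\widetilde X$, and any psh function on $\widetilde X_{\reg}$ which is locally bounded above near $\widetilde X_{\sing}$ extends uniquely to a psh function on $\widetilde X$; this extension coincides with $\widetilde u^*$. Once $\widetilde u^*$ is known to be psh on $\widetilde X$, subharmonicity of $\widetilde u^*\circ\widetilde h$ is automatic (composition with a holomorphic disc), and pulling back through the homeomorphism $\pi$ yields subharmonicity of $u^*\circ h$. The Fornaess–Narasimhan criterion on $X$ then concludes the proof.

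The principal obstacle is exactly the psh-extension theorem on normal complex spaces, which is where local irreducibility (used to reduce to normality via $\pi$) is truly essential. A direct argument requires either a local ambient embedding $\widetilde X\hookrightarrow\C^M$ together with an envelope construction showing that $\widetilde u^*$ is the trace of an ambient psh function, or an inductive use of the one-dimensional removable singularity theorem for subharmonic functions on generic codimension-one slices transverse to $\widetilde X_{\sing}$; in both strategies one relies crucially on $\codim\widetilde X_{\sing}\ge 2$, i.e.\ on normality of $\widetilde X$, which is precisely what the locally irreducible hypothesis on $X$ buys us.
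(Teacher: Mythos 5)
The paper does not actually prove this statement---it is quoted verbatim from \cite{Dem85}---so your attempt must be measured against Demailly's own argument, whose architecture your outline in fact reproduces: reduce to the normalization $\pi:\widetilde X\to X$, use the Fornaess--Narasimhan disc criterion, and invoke a removable-singularity theorem for plurisubharmonic functions. That architecture is sound, but your proof is incomplete at precisely the point you yourself flag as ``the principal obstacle'': the extension of a locally-bounded-above psh function across an analytic subset is the \emph{entire} analytic content of the theorem, and you neither prove it nor cite it. It is the classical Grauert--Remmert removable-singularity theorem (\emph{Plurisubharmonische Funktionen in komplexen R\"aumen}, 1956); sketching two possible strategies (``ambient envelope'' or ``slicing'') without carrying either out reduces the theorem to an essentially equivalent statement and stops there. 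If you are content to quote Grauert--Remmert the argument closes; as written it does not.

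Two further points. First, the extension problem arises in codimension one before it arises in codimension two: $\widetilde u=u\circ\pi$ is only known to be psh on $\pi^{-1}(X_{\reg})$, and $\widetilde X_{\reg}\setminus\pi^{-1}(X_{\reg})\subset\pi^{-1}(X_{\sing})$ may well be a hypersurface in the manifold $\widetilde X_{\reg}$; so even your assertion that $\widetilde u$ is weakly psh on $\widetilde X$ already uses the codimension-one removability theorem (here the hypothesis ``locally bounded above'' is indispensable), and only afterwards does one confront $\widetilde X_{\sing}$, which has $\codim\ge 2$. Relatedly, your closing claim that local irreducibility is essential \emph{because} it yields $\codim\widetilde X_{\sing}\ge 2$ misidentifies where the hypothesis enters: the normalization of \emph{any} reduced space is normal. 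What local irreducibility actually buys is the bijectivity of $\pi$, without which the regularization \eqref{eq:extension} mixes values from distinct local branches and genuinely fails to be psh---take $X=\{xy=0\}\subset\C^2$ with $u\equiv 0$ on one branch and $u\equiv -1$ on the other; then $u^*(0)=0$ violates the sub-mean-value inequality along the second branch. Second, lifting an arbitrary analytic disc $h:\bD\to X$ through $\pi$ via the universal property is automatic only when $h(\bD)$ is not contained in the non-normal locus; discs lying entirely inside $X_{\sing}$ require a separate (easy, but not free) argument, e.g.\ an induction on dimension.
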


Observe that since $u$ is plurisubharmonic on $X_{\reg}$, we have $u^* = u$ on $X_{\reg}$. 
Hence $u^*$ is the upper semi-continuous extension of $u|_{X_{\reg}}$  to $X$.
We note the following important consequence :

\begin{cor} \label{cor:uscregularization}
Let $\mathcal U \subset \mathrm{PSH} (X)$ be a non empty family of plurisubharmonic functions which is locally bounded from above on $X$. 
Then its upper  envelope
$$
U := \sup \{u \, ; \, u \in \mathcal U\},
$$
is a well-defined Borel function whose upper semi-continuous regularization $U^*$ is plurisubharmonic on $X$.
\end{cor}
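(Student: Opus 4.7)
My plan is to combine Choquet's lemma, the classical upper envelope theorem for plurisubharmonic functions on complex manifolds, and the Demailly extension theorem recalled just above. First I would apply Choquet's lemma to extract a countable subfamily $\{u_j\}_{j \ge 1} \subset \mathcal{U}$ whose pointwise upper envelope $V := \sup_j u_j$ satisfies $V^* = U^*$; after replacing $u_j$ with $\max(u_1, \ldots, u_j)$ I may assume the sequence is increasing. As a countable supremum of upper semi-continuous (in particular Borel) functions, $V$ is Borel, which in turn forces $U$ to be Borel.

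Next I would work on the complex manifold $X_{\reg}$. Each $u_j$ is a classical plurisubharmonic function there, the sequence is increasing and locally bounded above, so the classical upper envelope theorem for psh functions on complex manifolds yields that the upper semi-continuous regularization $W$ of $V|_{X_{\reg}}$, taken on $X_{\reg}$, is plurisubharmonic on $X_{\reg}$. Since $W$ is locally bounded above on $X$ (as $\mathcal{U}$ is), it is weakly plurisubharmonic on $X$ in the sense of the second item of the definition recalled above. Local irreducibility of $X$ then allows me to apply the Demailly theorem cited above the corollary: the extension $\widetilde W$ of $W$ defined by formula (\ref{eq:extension}) is plurisubharmonic on $X$.

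The remaining step is to identify $\widetilde W$ with $U^*$. On $X_{\reg}$ this is immediate since $W$ is already upper semi-continuous there. For $x \in X_{\sing}$, the inequality $\widetilde W(x) \le U^*(x)$ is clear from $X_{\reg} \subset X$. For the reverse I would pick $y_k \to x$ with $U(y_k) \to U^*(x)$ and select indices $j_k$ with $u_{j_k}(y_k) > U(y_k) - 1/k$. Invoking the Demailly theorem applied to the psh function $u_{j_k}$ itself, which by local irreducibility agrees with the usc extension of its restriction to $X_{\reg}$, gives $u_{j_k}(y_k) = \limsup_{X_{\reg} \ni z \to y_k} u_{j_k}(z)$; thus I can choose $z_k \in X_{\reg}$ close to $y_k$ with $u_{j_k}(z_k) > u_{j_k}(y_k) - 1/k$. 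Then $z_k \to x$ with $z_k \in X_{\reg}$ and $U(z_k) \ge u_{j_k}(z_k) \to U^*(x)$, producing the needed reverse inequality. The main obstacle is precisely this diagonal extraction at singular points, which rests squarely on local irreducibility via Demailly's theorem; all the preceding steps are routine adaptations of the manifold theory.
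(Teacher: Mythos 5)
Your route --- Choquet's lemma to reduce to a countable increasing subfamily, the classical Lelong/Brelot--Cartan envelope theorem on the manifold $X_{\reg}$, and then Demailly's extension theorem for locally irreducible spaces --- is exactly the argument the paper has in mind (it offers no proof, only the references to Lelong and to \cite{GZbook} following the Demailly theorem), and your identification of the extension $\widetilde W$ with $U^*$ at points of $X_{\sing}$ is precisely the one non-routine step that the citation glosses over. Two small repairs are needed. First, in the diagonal extraction you choose indices $j_k$ with $u_{j_k}(y_k) > U(y_k) - 1/k$; this is not possible in general, since $\sup_j u_j(y_k) = V(y_k)$ may be strictly smaller than $U(y_k)$ (the two envelopes only agree quasi-everywhere). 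Either select the approximating functions from the full family $\mathcal U$ --- the sub-mean-value argument along an analytic disc through $y_k$ meeting $X_{\reg}$, which is what underlies ``$u$ equals the usc extension of $u|_{X_{\reg}}$'' on a locally irreducible space, applies to any $u\in\mathcal U$ --- or instead choose $y_k$ so that $V(y_k)\to V^*(x)=U^*(x)$. Second, ``$V$ Borel forces $U$ Borel'' does not follow: $U$ coincides with $V$ only off the negligible Borel set $\{V<V^*\}$, on which the values of $U$ are a priori uncontrolled, so what one actually obtains is that $U$ is measurable and equals the usc function $U^*$ quasi-everywhere --- which is all that is ever used, and is the level of precision of the cited sources. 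Neither point affects the substance: the approach is the paper's own, carried out in more detail.
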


This result is an extension of a classical result of P. Lelong concerning envelopes of plurisubharmonic functions on open sets in $\C^n$ 
(see \cite{Lel61,GZbook}).

\smallskip

 Following \cite{FN}  we say that $X$ is Stein if it admits a ${\mathcal C}^2$-smooth strongly plurisubharmonic exhaustion.
We will use the following definition :

\begin{defi} 
A domain $\Omega \Subset X$ is   strongly pseudoconvex if it admits a negative ${\mathcal C}^2$-smooth strongly plurisubharmonic exhaustion,
 i.e. a function $\rho$ strongly plurisubharmonic in a neighborhood $\Omega'$ of $\overline{\Omega}$ such that 
 $\Omega := \{ x \in \Omega' \, ; \, \rho (x) < 0\}$ and for any $c < 0$, 
 $$
 \Omega_c := \{x \in \Omega'; \, \rho (x) < c\} \Subset \Omega
 $$
 is relatively compact.
  \end{defi}
 
 Our complex spaces will be assumed to be reduced, locally irreducible of dimension $n \ge 1$. 
 We denote by $\mathrm{PSH}(X)$ the set of plurisubharmonic functions on $X$. Finally, we fix a smooth, strictly positive $(1,1)$-form $\beta$ on $X$.

 \subsection{Monge-Amp\`ere operator on singular spaces}  \label{sec:BT}
 
  The complex Monge-Amp\`ere measure $(dd^c u)^n$ of a smooth psh function in a domain of $\C^n$  is 
  the   Radon measure
  $$
  (dd^c u)^n=c \det \left( \frac{\partial^2 u}{\partial z_i \partial \overline{z}_j} \right) dV_{\rm eucl},
  $$
  where $c>0$ is a normalizing constant. The definition has been extended to any bounded psh function
  by Bedford-Taylor, who laid down the foundations of {\it pluripotential theory} in \cite{BT76,BT82}
  (see \cite{GZbook} for a recent treatment).

 The complex Monge-Amp\`ere operator has been defined and studied 
 on complex spaces
 by Bedford in \cite{Bed82} and later by Demailly in \cite{Dem85}. 
  It turns out that if $u \in \mathrm{PSH}(X) \cap L^{\infty}_{\rm loc} (X)$, 
  the  Monge-Amp\`ere measure $(dd^c u)^n$ is well defined on $X_{\rm reg}$
   and can be extended  to $X$ as a Borel  measure with zero mass on $X_{\rm sing}$.
 
 Thus all standard properties of the complex Monge-Amp\`ere operator  acting on $ \mathrm{PSH}(X) \cap L^{\infty}_{\rm loc} (X)$ 
 extend to this setting (see \cite{Bed82,Dem85}). In particular :
 
 
 \begin{prop} \label{prop:comparisonpple} 
 Let  $\Omega \Subset X$ a relatively compact open set and $u, v \in \mathrm{PSH}(\Omega) \cap L^{\infty} (\Omega). $ 
 Assume that $\liminf_{x \to \zeta} (u(x) - v(x)) \ge 0$ for any $\zeta \in \partial \Omega$.
  Then
 $$
 \int_{\{u < v\}} (dd^c v)^n \le \int_{\{u < v\}} (dd^c u)^n.
 $$  
 In particular, if $(dd^c u)^n \le (dd^c v)^n$ weakly on $\Omega$, then $v \le u$ on $\Omega$.
 \end{prop}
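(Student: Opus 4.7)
The plan is to reduce the statement to the classical Bedford--Taylor comparison principle on the complex manifold $X_\reg$. The crucial structural fact, supplied by the Bedford--Demailly theory, is that both measures $(dd^c u)^n$ and $(dd^c v)^n$ coincide on $X_\reg$ with the classical Bedford--Taylor operators and extend to $X$ by putting zero mass on $X_\sing$, so every integral below sees only the regular part.

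The first step is a boundary reduction. For $\ep > 0$ set $v_\ep := v - \ep$. The assumption $\liminf_{x\to\zeta}(u-v) \ge 0$ on $\partial\Omega$ forces $\{u < v_\ep\}$ to be relatively compact in $\Omega$. Since these open sets increase to $\{u<v\}$ as $\ep \downarrow 0$, it suffices to prove
$$\int_{\{u < v_\ep\}}(dd^c v)^n \le \int_{\{u < v_\ep\}}(dd^c u)^n$$
for $\ep$ varying in a sequence tending to $0$. Because the level sets $\{u-v = -\ep\}$ are pairwise disjoint and $(dd^c u)^n$ is a finite Radon measure on $\Omega$, one can further choose these $\ep$ outside the at-most-countable exceptional set on which $(dd^c u)^n(\{u-v=-\ep\})>0$.

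Next comes the standard envelope trick. Consider $w := \max(u, v_\ep) \in \PSH(\Omega) \cap L^\infty(\Omega)$; by construction $w = u$ on an open neighborhood of $\partial\Omega$ and $w = v_\ep$ on the open set $\{u<v_\ep\}$, so by locality of the Bedford--Taylor operator on $X_\reg$ one has $\1_{\{u<v_\ep\}}(dd^c w)^n = \1_{\{u<v_\ep\}}(dd^c v)^n$ and $\1_{\{u>v_\ep\}}(dd^c w)^n = \1_{\{u>v_\ep\}}(dd^c u)^n$. The decisive ingredient is the Stokes-type mass identity
$$\int_\Omega (dd^c w)^n = \int_\Omega (dd^c u)^n,$$
valid because $w - u$ is supported away from $\partial \Omega$. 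Splitting the two sides according to the sign of $u - v_\ep$, using the generic choice of $\ep$ (so that $\{u = v_\ep\}$ has zero $(dd^c u)^n$-mass), and discarding the nonnegative $(dd^c w)^n$-mass on $\{u = v_\ep\}$ yields the desired inequality, after which the monotone limit $\ep \downarrow 0$ concludes.

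The main obstacle is the Stokes identity itself on the singular space: on the regular part the support of $w - u$ is contained in the compact set $\overline{\{u \le v_\ep\}}$, which may however meet $X_\sing$, so $w-u$ need not be compactly supported inside the manifold $\Omega \cap X_\reg$. One handles this by exhausting $\Omega \cap X_\reg$ by smoothly bounded subdomains avoiding a shrinking neighborhood of $X_\sing$, applying the classical integration-by-parts identity on each, and passing to the limit while exploiting that $(dd^c u)^n$ and $(dd^c w)^n$ put no mass on $X_\sing$, as in \cite{Bed82,Dem85}. Finally, the ``in particular'' clause follows by a routine perturbation: if $v > u$ somewhere in $\Omega$, replace $v$ by $\tilde v := v + \delta \rho$ with $\rho$ the negative strongly plurisubharmonic defining function of $\Omega$, so that $\tilde v \le v$, $\tilde v = v$ on $\partial\Omega$ and $(dd^c \tilde v)^n \ge (dd^c v)^n + \delta^n (dd^c \rho)^n$; the first part of the proposition applied to $u$ and $\tilde v$ then forces $\delta^n \int_{\{u < \tilde v\}}(dd^c\rho)^n \le 0$, contradicting the strict positivity of $(dd^c\rho)^n$ on the nonempty open set $\{u < \tilde v\}$.
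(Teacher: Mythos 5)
The paper does not actually write out a proof of this proposition: it records it as one of the ``standard properties'' of the Monge--Amp\`ere operator that transfer to singular spaces and refers to Bedford \cite{Bed82}, noting only that ``the proof relies on Stokes formula''. Your argument is exactly that intended proof (boundary shift $v\mapsto v-\ep$, the envelope $w=\max(u,v-\ep)$, conservation of total mass via Stokes, a generic choice of $\ep$, and the perturbation $v+\delta\rho$ for the uniqueness clause), so the route is the right one; but two of your steps are not justified as written. The first concerns openness: the sets $\{u<v-\ep\}$ and $\{u>v-\ep\}$ are \emph{not} open in general, since $u$ and $v$ are merely upper semicontinuous, so neither $u-v$ nor $v-u$ is u.s.c.; already for $u$ a constant $c$ and $v$ bounded psh, the set $\{v>c\}$ can be nonempty with empty interior. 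Hence your appeal to ``locality of the Bedford--Taylor operator'' on these sets is not licensed. The identities $\1_{\{u<v-\ep\}}(dd^c w)^n=\1_{\{u<v-\ep\}}(dd^c v)^n$ and $\1_{\{u>v-\ep\}}(dd^c w)^n=\1_{\{u>v-\ep\}}(dd^c u)^n$ are true, but they are the content of the plurifine locality of the operator, equivalently of Demailly's inequality $(dd^c\max(u,v))^n\ge \1_{\{u>v\}}(dd^c u)^n+\1_{\{u\le v\}}(dd^c v)^n$, which is itself proved by reducing to continuous data via quasicontinuity and monotone approximation; that reduction must be carried out (or quoted) on the singular space. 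The same slip reappears at the very end: ``the nonempty open set $\{u<\tilde v\}$'' should be replaced by the observation that this set has positive $\beta^n$-measure, which one gets from the sub-mean-value inequality at a regular point where $u<\tilde v$.

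The second point is the Stokes identity across $X_{\sing}$, which you correctly single out as the main obstacle, but the limiting argument you propose does not close by itself. When you integrate by parts on an exhaustion of $G\cap X_{\reg}$ avoiding a $\delta$-neighbourhood of $X_{\sing}$, the boundary contribution near $X_{\sing}$ has the form $\int d^c(w-u)\wedge T$ with $T$ a closed positive $(n-1,n-1)$-current, and the fact that $(dd^c u)^n$ and $(dd^c w)^n$ put no mass on $X_{\sing}$ gives no control on such terms. What is needed is the Chern--Levine--Nirenberg machinery and the extension theorem for closed positive currents across the analytic set $X_{\sing}$ (or a reduction to local embeddings $X\hookrightarrow\C^N$), which is precisely what \cite{Bed82,Dem85} supply. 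With these two ingredients either proved or explicitly quoted, your argument is complete and coincides with the proof the paper has in mind.
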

 
 The proof relies on Stokes formula  (see \cite{Bed82}). 
  As a first step towards proving Theorem~\ref{thmA}, we treat the following simpler
 Dirichlet problem with {\it bounded density} and {\it zero boundary value}. This will be a key result to prove the subextension property (Lemma~\ref{lem:Subext}), on which relies, in turn, the central stability estimate (Proposition~\ref{prop:stability}). 
 
 \begin{lem} \label{lem:DP0} 
 Let $\Omega \Subset X$ be a   strongly pseudoconvex domain
and $0 \leq f \in L^{\infty} (\Omega)$ a bounded density. 
 There exists a unique function $w \in \PSH(\Omega) \cap L^{\infty}(\Omega)$ such that 
 $$
 (dd^c w)^n = f \beta^n
 \; \; \text{ on } \; \; \Omega,
 \; \; \text{ with } \; \;
 w_{|\partial \Omega}=0.
 $$
 \end{lem}

 \begin{proof}
  Up to scaling $\rho$, one can assume that $dd^c \rho \ge \beta$. The function $v_A  := A \rho $ is plurisubharmonic and smooth
   in a neighborhood of  $\overline{\Omega}$, with ${v_A}_{|\partial \Omega}=0$.
 Moreover    $(dd^c v_A)^n \geq  A^n \beta^n \ge f \beta^n$ in $\Omega$
    if $A \geq \Vert f\Vert_{\infty}^{1 \slash n}$, hence
    $v_A$ is  a subsolution to the Dirichlet problem if $A$ is large enough.
    
   We consider the envelope of subsolutions  
   $
   w := \sup \{v ; v \in \mathcal S \},
   $
   where
   $$
  \mathcal S :=  \{v  \, ; \,  v \in \PSH (\Omega) \cap L^{\infty} (\Omega),  \, \, (dd^c v)^n \geq f \beta^n \text{ and } v_{|\partial \Omega} \leq 0 \}.
   $$
   It follows from the maximum principle that $w \leq 0$, while $w \geq v_A$ since  $v_A \in \mathcal S$.
   
   A classical argument guarantees that    $w$ is   plurisubharmonic 
 and again a subsolution to the Dirichlet problem i.e.  $(dd^c w)^n \geq f \beta^n$ on $\Omega$ and $w = 0$ in $\partial \Omega$
 (see the proof of Lemma \ref{lem:MaxSubSol} where this is explained in  a slightly more general context).
   
   It remains to show that  $(dd^c w)^n = f \beta^n$ in $\Omega_{\rm reg}$. This follows from a  balayage process. 
Choose a chart $(B,z)$ in a neighborhood of given regular point $a  \in \Omega_{\rm reg}$ such that there exists an isomorphism 
$F : B \rightarrow \B $  from a neighborhood of $\bar B$ onto a neighborhood of the closed euclidean unit ball $\overline \B$ with $F (B) = \B$.  
We consider the Dirichlet problem 
\begin{equation} \label{DirPb0}
(dd^c u)^n = f \beta^n, \, \, \hbox{on} \, \, \, B \, \, \, \hbox{and} \, \, \, u_{\mid \partial B} = w.  
\end{equation}
Observe that $w$ is a subsolution. 
Using  a result of Cegrell
\cite{Ce84}, we can find a solution $u_B \in \mathrm{PSH} (B) \cap L^{\infty} (B) $ to this problem (see also \cite[Theorem 5.17]{GZbook}).  
It follows from the comparison principle (Proposition \ref{prop:comparisonpple}) that $ w \le u_B$ on $B$.

The function $u$ defined on $\Omega$ by $ u = u_B$ on $B$ and $u = w $ on $\Omega \setminus B$ 
is a plurisubharmonic  subsolution to the Dirichlet problem (\ref{DirPb0}) , since the smooth volume form  $\beta^n$ puts no mass on $\partial B$. Therefore  $ u \le w$ on $\Omega$.
This yields    $u = w = u_B$ on $B$, hence $(dd^c w) ^n = f \beta^n$ in $B$. 
Since $B\subset \Omega_{\rm reg}$ was arbitrary, it follows that $(dd^c w)^n  = f \beta^n$ on $\Omega_{\rm reg}$, 
hence on the whole $\Omega$ since none of the two measures puts any mass on $\Omega_{\rm sing}$.
 \end{proof}

  \subsection{Stability estimates}  \label{sec:stability}
  
  \subsubsection{Subextension}
 
 We now explain a useful subextension property of plurisubharmonic functions with finite Monge-Amp\`ere mass.   In the sequel we set for a bounded open subset $\Omega\subset X$
  $$
  \mathcal E^0 (\Omega)=
 \left\{\phi \in \PSH (\Omega) \cap L^{\infty} (\Omega), \; 
  \phi_{|\partial \Omega} = 0
  \text{ and } \int_\Omega (dd^c \phi)^n < + \infty \right\}.
  $$

 \noindent
  Let  now $\Omega \Subset \widetilde{\Omega} \Subset X$ be two strongly pseudoconvex domains.

 \begin{lem}  \label{lem:Subext}
 Given $u \in \mathcal E^0 (\Omega)$ we extend it to $\widetilde \Omega$ by $0$ outside $\Omega$ and consider  
 $$
 \tilde u := \sup \{ v \in \PSH (\widetilde \Omega) \ ; \  \, v \leq {\bf 1}_{\Omega} \, u \, \, \, \hbox{in} \, \, \, \widetilde \Omega\}.
 $$
Then  $\tilde u \in \mathcal  E^0 (\widetilde \Omega)$, $\tilde u \leq u$ in $\Omega$ and 
  $\int_{\widetilde \Omega} (dd^c \tilde u)^n \leq \int_{\Omega} (dd^c u)^n .$
 \end{lem}
 
 This statement is proved in \cite{CZ03} for domains in $\C^n$. 
 The proof follows the same idea with a twist relying on \cite{FN} and Lemma~\ref{lem:DP0} as we explain below.
 
  \begin{proof} 
Let $\tilde \rho$ be a plurisubharmonic defining function of $\widetilde \Omega$. Since $u $ is bounded in $\Omega \Subset \widetilde \Omega$, 
one can find $A \gg 1$ such that $A \tilde \rho \leq u \leq 0$ in $\Omega$. 
 Thus  $\tilde u$ is a well defined bounded plurisubharmonic function on $\widetilde \Omega$ such that $A \tilde \rho  \leq \tilde u \leq 0$ in $\widetilde \Omega$ and $\tilde u \leq u$ in $\Omega$. 
 
 We need to estimate the Monge-Ampère mass of $\tilde u$.
  The main idea is to construct a subsolution with at most the same Monge-Ampère mass by solving the complex Monge-Ampère equation $(dd^c v)^n = {\bf 1}_\Omega (dd^c u)^n$ on $\widetilde \Omega$.  
  We are not yet able to solve such an equation unless the right hand side is a measure with bounded density thanks to Lemma \ref{lem:DP0},
  so we proceed by approximation.
  
Assume first that $u$ is plurisubharmonic  in a Stein neighborhood $\Omega'$ of $\bar \Omega$.  
A result of Fornaess-Narasimhan \cite{FN} ensures the existence of
 a sequence $(u_j)$ of smooth plurisubharmonic functions on $\Omega'$ which decreases to $u$.
As the measure $\mu_j :=  (dd^c  u_j)^n = f_j \beta^n$ has bounded density  $f_j \in L^{\infty} ( \Omega)$,
it follows from Lemma \ref{lem:DP0} that one can  solve the Dirichlet problem $(dd^c v_j)^n = {\bf 1}_{\Omega} (dd^c  u_j)^n$ on $\widetilde \Omega$ with boundary value $ v_j = 0$ in $\partial{ \widetilde \Omega}$. Since $ v_j \leq 0 \leq  u_j$ in $\partial \Omega$, 
it follows from Proposition \ref{prop:comparisonpple} on $\Omega$, that $ v_j \leq u_j$ in $\Omega$.
 Moreover $\int_{\widetilde \Omega} (dd^c v_j)^n = \int_{\Omega} (dd^c  u_j)^n$. 
  By definition  $v_j \leq \tilde u_j \leq 0$ in $\widetilde \Omega$ and   $v_j = \tilde u_j = 0$ in $ \partial \widetilde \Omega$. 
Proposition \ref{prop:comparisonpple} thus yields $\int_{\widetilde \Omega} (dd^c \tilde u_j)^n \leq  \int_{\widetilde \Omega} (dd^c v_j)^n = \int_{\Omega} (dd^c  u_j)^n$.
 
 Since   $(u_j)_j$ decreases to $u$ in a neighborhood of $\bar \Omega$ and $A \rho \leq u$ in $\Omega$,
  the sequence $(\tilde u_j)$  decreases to a bounded plurisubharmonic function   $v $ such that 
  $A \tilde \rho \leq v \leq 0$ in $\widetilde \Omega$ and $v \leq u$ in $\Omega$. 
  Moreover  $\int_{\widetilde \Omega} (dd^c v)^n \leq \int_{\bar{\Omega}} (dd^c  u)^n < \infty$ hence $v \in \mathcal E^0 (\widetilde \Omega)$.  
  Observe that $ v \leq \tilde u$ in $\widetilde \Omega$ and $v = \tilde u = 0$ in $\partial \widetilde \Omega$. Hence $\int_{\widetilde \Omega} (dd^c \tilde u)^n \leq \int_{\Omega} (dd^c v)^n \leq  \int_{\bar{\Omega}} (dd^c  u)^n < \infty$.
 
To treat the general case we let $(\Omega_j)_j$ be an exhaustive sequence of bounded strongly pseudoconvex domains in $\Omega$
and set
 $$
 w_j :=  \sup \{ v \in \PSH (\widetilde \Omega) \ ; \ v < 0 \, \, \hbox{and} \, \, v \leq u \, \hbox{on} \, \, \Omega_j\}.
 $$
 Since $ A \tilde \rho \leq u$ in $\Omega$, it follows that $w_j$ is a bounded plurisubharmonic function in $\widetilde \Omega$ such that $A \tilde \rho \leq \tilde u \leq  w_j \leq {\bf 1}_{\Omega_j} u$ in $\widetilde \Omega$. 
 Moreover $(w_j)$   decreases   to a bounded psh functions $w $ such that  $ A \tilde \rho \leq \tilde u \leq w \leq {\bf 1}_{\Omega} u$ in $\widetilde \Omega$. Hence  $w \leq \tilde u$ in $\widetilde \Omega$.
  This proves that $w = \tilde u$ in $\widetilde \Omega$.
  
 The previous case ensures that
 $
  \int_{\widetilde \Omega} (dd^c w_j)^n   \leq   \int_{\bar \Omega_j} (dd^c u)^n \leq \int_{\Omega} (dd^c u)^n,
$
hence
 $$ \int_{\widetilde \Omega} (dd^c \tilde u)^n \leq \liminf_{j \to + \infty}   \int_{\widetilde \Omega} (dd^c w_j)^n \leq  \int_{\Omega} (dd^c u)^n.
 $$
This shows that $\tilde u \in \mathcal E^0(\widetilde \Omega)$ and $ \int_{\widetilde \Omega} (dd^c \tilde u)^n \leq \int_{\Omega} (dd^c u)^n.$
\end{proof}

  \subsubsection{Stability}
 
 We now state an important stability estimate that we shall use repeatedly in the sequel:

\begin{prop} \label{prop:stability}
Let  $\Omega \Subset X$ a relatively compact open set  and $u, v \in \mathrm{PSH}(\Omega) \cap L^{\infty} (\Omega). $ 
Let $\beta$ be a smooth positive $(1,1)$-form on $X$ and let $f\in L^p(\Omega)$ for some $p>1$. 
 Assume that 
 \[(dd^c u)^n = f\beta^n \quad \mbox{on } \,\, \Omega.\]
Then for any $0 <\gamma < 1 \slash (nq + 1)$, there exists a constant $C = C(\gamma, \|f\|_p) > 0$ such that
$$
\sup_{\Omega}(v-u)_+ \le \sup_{\partial \Omega}(v-u)^*_+  \, + \, C \Vert (v-u)_+\Vert_1^\gamma.
$$
where $\Vert (v-u)_+\Vert_1 := \int_\Omega (v-u)_+ \beta^n$, and $q$ is the conjugate exponent of $p$, i.e. $\frac 1p+\frac 1q=1$.
\end{prop}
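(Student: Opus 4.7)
The proposition is the Kolodziej-type stability estimate, transplanted to the singular Stein setting. The proof scheme in the smooth case is by now classical (Kolodziej, Dinew, Guedj-Zeriahi), and the singularities of $X$ cause no essential trouble since Bedford-Demailly's pluripotential theory on singular spaces provides a Monge-Amp\`ere operator, a Bedford-Taylor capacity $\ca$, and -- crucially -- the comparison principle of Proposition~\ref{prop:comparisonpple}.

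The plan is to first reduce to a one-function statement: setting $M := \sup_{\partial\Omega}(v-u)^*_+$ and $w := (v-u-M)_+$, one has to show $a := \sup_\Omega w \le C\|w\|_1^\gamma$. For $0 < t < a$, consider the relatively compact sublevel set $E(t) := \{v > u + M + t\}$. The heart of the argument is the Kolodziej capacity inequality
\[
t^n\,\ca(E(2t)) \;\le\; C \int_{E(t)} (dd^c u)^n,
\]
obtained by testing the supremum defining $\ca(E(2t))$ against candidates $\rho \in \PSH(\Omega)$ with $-1 \le \rho \le 0$, and comparing $u+t\rho$ with $v$ via Proposition~\ref{prop:comparisonpple}. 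Combined with the hypothesis $(dd^c u)^n \le f\beta^n$ and H\"older's inequality, this yields
\[
t^n\,\ca(E(2t)) \;\le\; C\,\|f\|_{L^p}\;\vol_\beta(E(t))^{1/q}.
\]

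The conclusion then follows by a De Giorgi-type iteration. Combining the above with the classical volume--capacity comparison $\vol_\beta(K) \le C \exp(-\alpha\,\ca(K)^{-1/n})$ and Chebyshev's bound $\vol_\beta(E(t)) \le t^{-1}\|w\|_1$ produces a self-improving chain of inequalities that transforms the $L^1$-smallness of $w$ into the desired sup-norm bound with exponent any $\gamma < 1/(nq+1)$. The main technical obstacle is to verify that every pluripotential-theoretic ingredient -- capacity-volume comparison, Chern-Levine-Nirenberg inequalities, envelope constructions entering the Kolodziej trick -- remains valid across $X_{\sing}$. This is guaranteed by the standing assumptions: $X$ is locally irreducible, $\Omega$ is strongly pseudoconvex, $u,v$ are bounded, and the relevant Monge-Amp\`ere measures place no mass on $X_{\sing}$, so all integrals and envelopes can be legitimately manipulated exactly as in the smooth case.
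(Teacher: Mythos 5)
Your proposal follows essentially the same route as the paper, which does not write out a proof but cites \cite[Theorem 1.1]{GKZ08} and observes that the capacity-based argument (comparison principle, Kolodziej's capacity inequality for sublevel sets, volume--capacity domination, and the resulting iteration giving any exponent $\gamma < 1/(nq+1)$) transfers verbatim to the singular Stein setting once the Bedford--Demailly pluripotential theory and Proposition~\ref{prop:comparisonpple} are in place. Your outline correctly identifies all the ingredients of that argument and the same justification for why singularities cause no trouble, so it matches the paper's intended proof.
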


Here  $w_+ := \sup \{w ,0\}$ and $w^*$ means the upper semi-continuous extension of a bounded function $w$ on $\Omega$ 
to the boundary i.e. $w(\zeta) := \limsup_{z \to \zeta} w (z)$.

 This result is proved in  \cite[Theorem 1.1]{GKZ08} when the ambient space is $X=\C^n$
 (see also \cite[Proposition 3.3]{EGZ09} for a similar result in the context of compact K\"ahler varieties).
 The proof relies on capacity estimates and the use of the comparison principle (Proposition \ref{prop:comparisonpple}).
 It can be reproduced identically in our present context, once the following 
 volume-capacity comparison has been established:
 
  \begin{lem}  \label{lem:VolCapIneq} 
 For any $u \in \mathcal \PSH \cap L^{\infty}(\Omega)$
 such that $\int_{\Omega} (dd^c u)^n \leq 1$ and $u_{|\partial \Omega}=0$,
\begin{equation} \label{eq:expint}
\int_\Omega e^{-  u} \beta^n \leq C=C (n,\Omega).
\end{equation}
In particular, we have the following inequality for any Borel  set $K \subset \Omega$
\begin{equation} 
\label{eq:VolCapIneq}
{\rm Vol}_\beta (K) \leq C \exp \left(-  \, {\rm Cap} (K,\Omega)^{-1\slash n}\right).
\end{equation}
\end{lem}
 
Here $\hbox{Vol}_\beta (K) := \int_K \beta^n$ and $\hbox{Cap} (K,\Omega)$ denotes the Monge-Amp\`ere capacity.

 \begin{proof}[Sketch of proof of Lemma~\ref{lem:VolCapIneq}]
 The fact that \eqref{eq:VolCapIneq} follows from \eqref{eq:expint} is standard by choosing $u$ to be a multiple of the extremal function $h_{K,\Omega}$ of $K$, which satisfies $\int_\Omega (dd^c h_{K,\Omega}^*)^n=\mathrm{Cap}(K,\Omega)$, cf~\cite[Theorems 4.32 \& 4.34]{GZbook}.
 
 The proof of \eqref{eq:expint} goes roughly as follows. Choose a strongly pseudoconvex domain $\widetilde \Omega$ (with strictly psh exhaustion function $\tilde \rho$) containing $\bar \Omega$ in its interior, and consider the subextension $\tilde u $ of $u$ constructed in Lemma~\ref{lem:Subext}. Recall that $\tilde u\in \mathcal E^0(\widetilde \Omega)$ and $\int_{\widetilde \Omega} (dd^c \tilde u)^n \le 1$. Since $\tilde u \le u$ on $\Omega$, one has $\int_\Omega e^{-u}\beta^n \le \int_{\bar \Omega}e^{-\tilde u} \beta^n$ hence it is sufficient to bound that last quantity. 
 
 For any element $v\in \mathcal E^0(\widetilde \Omega)$, one has $\int_{\widetilde \Omega} (- v)^n (dd^c \tilde\rho)^n\le c_n \|\tilde \rho\|^n_{\infty}\int_{\widetilde \Omega}(dd^c  v)^n$ for some universal constant $c_n$. In particular, 
 the set $\Sigma:=\{v\in \mathcal E^0(\widetilde \Omega); \int_{\widetilde \Omega} (dd^c v)^n \le 1\}$ is relatively compact in $\PSH(\widetilde \Omega)$ for the $L^1_{\rm loc}$ topology and one can check that for any $v\in \overline \Sigma$, $v$ has a well-defined Monge-Ampère and  $ \int_\Omega (dd^c v)^n \le 1$, cf \cite[Corollary~4.4]{Zer01}. 
 
 Now, take a resolution $p:\widehat \Omega\to \widetilde \Omega$, choose a strictly positive $(1,1)$-form $\hat \beta \ge p^*\beta$ on $\widehat \Omega$  and fix a compact set $K\subset \widetilde \Omega$. By the property above, functions $\hat v\in p^*\overline\Sigma$ have Monge-Ampère mass at most one, hence $\sup_{x\in p^{-1}(K)} \nu(\hat v,x) \le 1$. By covering $p^{-1}(K)$ with finitely many coordinate charts, one can use \cite[Corollary~3.2]{Zer01} to get a constant $C_K>0$ such that $\int_{p^{-1}(K)}e^{-\hat v}\hat\beta^n\le C_K$ for any $\hat v\in p^*\overline \Sigma$. The conclusion now follows by choosing $K=\bar \Omega$ and $\hat v=p^*\tilde u$. 
 \end{proof}

 \section{Dirichlet problem on singular complex spaces} \label{sec:Dircont}
 
 In this Section, we will work in the following
 \begin{set}
 \label{set Dir}
 Let $X$ be a Stein space of dimension $n\ge 1$, reduced and locally irreducible. We fix a smooth positive $(1,1)$-form $\beta$ on $X$, not necessarily closed. Let $\Omega \Subset X$ be a pseudoconvex domain with $\mathcal C^2$-smooth exhaustion function $\rho$, let $f\in L^p(\Omega)$ for some $p>1$ and let $\mu:=f\beta^n$. Finally, let $\phi \in \mathcal C^0(\partial \Omega)$. 
 \end{set}
 
 By the comparison principle Proposition \ref{prop:comparisonpple}, if the Dirichlet problem (\ref{eq:DirPb}) admits a solution,
 it dominates any subsolution. We will now prove that the maximal subsolution is psh, hence coincides with the solution in that case.

\begin{defi}
A plurisubharmonic function $v \in \mathrm{PSH}^{\infty} (\Omega) := \mathrm{PSH} (\Omega) \cap L^{\infty} (\Omega)$ is a subsolution to  the Dirichlet problem (\ref{eq:DirPb}) with data $(\phi,\mu)$ if the following two conditions are satisfied :
\begin{enumerate}
\item $v^* (\zeta) := \limsup_{\Omega \ni z \to \zeta} v (z) \le \phi (\zeta)$, for any $\zeta \in \partial \Omega$,
\item $(dd^c v)^n \ge \mu$ weakly in $\Omega$.
\end{enumerate}
\end{defi}

It is   natural to consider the family $\mathcal{S}_{\phi,\mu}  = \mathcal{S}_{\phi,\mu} (\Omega)$ of all subsolutions to the Dirichlet problem (\ref{eq:DirPb}) with data $(\phi,\mu)$ and its upper envelope when it exists.

\subsection{The subsolution property}

We start with the following standard observation.

\begin{lem}  \label{lem:MaxSubSol}
In Setting~\ref{set Dir}, assume that the Dirichlet problem (\ref{eq:DirPb}) with data $(\phi,\mu)$ admits a subsolution $v_0 \in \mathcal{S}_{\phi,\mu} (\Omega)$. 
Then the  upper envelope of subsolutions  
$$
U := U_{\phi,\mu} := \sup \{v \ ; \ v \in \mathcal{S}_{\phi,\mu} (\Omega)\}
$$
is a subsolution to the Dirichlet problem (\ref{eq:DirPb}), i.e. $ U_{\phi,\mu}  \in \mathcal{S}_{\phi,\mu} (\Omega)$. 

\noindent
Moreover, if $v_0 = \phi$ on $\partial \Omega$ then
\begin{equation}\label{eq:SubSol}
  \lim_{z \to \zeta} U (z) = \phi (\zeta),  \quad \mbox{for any } \,\,  \zeta \in \partial \Omega.
\end{equation}
\end{lem}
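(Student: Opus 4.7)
The plan is to verify the two defining conditions of a subsolution for $U = U_{\phi,\mu}$, and then to deduce the boundary limit from the hypothesis on $v_0$. First, $\cS := \cS_{\phi,\mu}$ is non-empty (it contains $v_0$) and uniformly bounded above on $\Omega$: every $v \in \cS$ is plurisubharmonic and bounded with $v^* \le \phi$ on $\partial\Omega$, so the maximum principle on the Stein space $X$ forces $v \le \sup_{\partial\Omega} \phi$. Corollary~\ref{cor:uscregularization} then ensures that the upper semicontinuous regularization $U^*$ lies in $\psh(\Omega)$. A standard Bedford-Taylor argument shows that $\cS$ is stable under finite maxima: plurisubharmonicity and the boundary inequality are preserved, and $(dd^c\max(v_1,v_2))^n \ge \mu$ reduces, by locality, to the same inequality for $v_1$ and $v_2$ on $X_{\reg}$.

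Next, Choquet's lemma extracts a countable subfamily $\{v_j\} \subset \cS$ with $(\sup_j v_j)^* = U^*$; replacing $v_j$ by $w_j := \max(v_0, v_1, \ldots, v_j) \in \cS$ yields an increasing sequence in $\cS$ whose pointwise limit $W$ satisfies $W^* = U^*$. By Bedford-Taylor's monotone convergence for the complex Monge-Amp\`ere operator on uniformly bounded psh functions, $(dd^c w_j)^n$ converges weakly to $(dd^c U^*)^n$ on $\Omega$; passing to the limit in the inequality $(dd^c w_j)^n \ge \mu$ then gives $(dd^c U^*)^n \ge \mu$ in $\Omega$.

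For the boundary inequality $U^* \le \phi$ on $\partial\Omega$, I would exploit strong pseudoconvexity to build a global upper barrier. Extend $\phi$ continuously to $\overline{\Omega}$ and smooth it to $\widetilde{\phi}_\delta$ with $\|\widetilde{\phi}_\delta - \widetilde{\phi}\|_\infty < \delta$ via local embeddings of $X$ into $\C^N$ and a partition of unity; then set $B_\delta := \widetilde{\phi}_\delta + \delta - A\rho$ for $A$ large enough that $A\,dd^c\rho$ dominates the complex Hessian of $\widetilde{\phi}_\delta$. The function $B_\delta$ is continuous and plurisuperharmonic on $\overline{\Omega}$, and $B_\delta \ge \phi$ on $\partial\Omega$. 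For every $v \in \cS$, $v - B_\delta$ is plurisubharmonic on $\Omega$ with $(v - B_\delta)^* \le 0$ on $\partial\Omega$, so the maximum principle gives $v \le B_\delta$ in $\Omega$. Taking the supremum over $\cS$ and letting $\delta \to 0$ yields $U^*(\zeta_0) \le \phi(\zeta_0)$ for every $\zeta_0 \in \partial\Omega$. Combined with the previous paragraph, $U^* \in \cS$, and since $U \le U^*$ while $U^* \in \cS$ forces $U^* \le \sup_{v \in \cS} v = U$, we conclude $U = U^* \in \cS_{\phi,\mu}$.

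Under the extra hypothesis $v_0 = \phi$ on $\partial\Omega$, the inequality $v_0 \le U \le U^*$ together with $\lim_{z \to \zeta} v_0(z) = \phi(\zeta)$ give $\liminf_{z \to \zeta} U(z) \ge \phi(\zeta)$, which combined with the upper bound already established yields $\lim_{z\to\zeta}U(z)=\phi(\zeta)$, i.e.~\eqref{eq:SubSol}. The main technical point requiring care is the global barrier construction on the singular Stein space: one must smooth a continuous function while keeping control on its complex Hessian, which is routine via local embeddings and a partition of unity, and then use the strict plurisubharmonicity of $\rho$ to absorb this Hessian bound. The remaining ingredients --- closure of $\cS$ under finite maxima, Choquet's lemma, and Bedford-Taylor monotone convergence --- transfer essentially verbatim from the smooth case using the results recalled in Section~\ref{sec:BT}.
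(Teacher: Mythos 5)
Your proof is correct and follows essentially the same Perron--Bremermann strategy as the paper: Choquet's lemma plus the upper-semicontinuous regularization result (Corollary~\ref{cor:uscregularization}) to get $U^*\in\PSH(\Omega)$ with $(dd^cU^*)^n\ge\mu$, a plurisuperharmonic barrier of the form $(\text{smoothing of }\phi)-A\rho$ built from the strictly psh defining function to force $U^*\le\phi$ on $\partial\Omega$, and the sandwich $v_0\le U\le(\text{barrier})$ for the boundary limit. The only (cosmetic) difference is that the paper routes the boundary estimate through the auxiliary maximal function $U_\phi=U_{\phi,\mathbf 0}$ with two-sided barriers $A\rho+\phi\le U_\phi\le -B\rho+\phi$ (a claim it reuses later in Theorem~\ref{thm:contsoldir}), and obtains $(dd^cU^*)^n\ge\mu$ via the comparison principle rather than your monotone-convergence argument along an increasing Choquet sequence of maxima; both variants are standard and equally valid here.
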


The proof of the lemma will actually not make use of the assumption $f\in L^p(\Omega)$, but we chose to stick to Setting~\ref{set Dir} throughout this section for clarity. 

\begin{proof}
Observe that for any $u \in \mathcal{S}_{\phi,\mu} (\Omega)$ we have $ u \le M_\phi:= \max_{\partial \Omega} \phi$ on $\Omega$. 
Therefore, the envelope $U := U_{\phi,\mu}$ is a well-defined bounded function on $\Omega$ and $U^*$ is a bounded plurisubharmonic function on $\Omega$ by Corollary  \ref{cor:uscregularization}.
It follows then   from  a topological lemma of Choquet  and the comparison principle Proposition \ref{prop:comparisonpple},   
 that $U^*$ satisfies the inequality $(dd^c U^*)^n \ge \mu$ in $\Omega$ (see \cite[Theorem~5.12, Step 1]{GZbook}). It remains to show that $U=U^*\in \mathrm{PSH}(\Omega)$ and that the boundary values of $U$ are $\le \phi$ (resp. $=\phi$ in the second case). \\

\noindent
{\it Claim.}   Let $U_\phi := U_{\phi,{\bf 0}}$ be the maximal plurisubharmonic function on $\Omega$  with boundary values $\le \phi$. Then, $U_\phi=U_\phi^* \in \mathrm{PSH}(\Omega)$ and it satisfies
\begin{equation} \label{eq:MaxSubSolphi}
\lim_{z \to \zeta} U_\phi (z) = \phi (\zeta), \quad \mbox{for any} \,\, \zeta \in \partial \Omega.
\end{equation}

For the time being, assume that the claim holds and let us finish the proof of Lemma~\ref{lem:MaxSubSol}.  Since we have 
\begin{equation}
\label{boundary}
v_0\le U\le U_\phi \quad \mbox{on } \,\,  \Omega
\end{equation}
the claim would imply $v_0\le U^*\le U_\phi$ on $\Omega$. Thus, one would get  $U^* \in \mathcal{S}_{\phi,\mu} (\Omega)$,
hence $U^* \le U$ on $\Omega$, and finally $U^* = U$ on $\Omega$. The boundary condition follows from \eqref{boundary}.\\

It now remains to prove the claim. To establish \eqref{eq:MaxSubSolphi}, we fix $\varepsilon > 0$ and let $\psi$ be a $\mathcal C^2$ function on $\partial \Omega$ such that $\psi - \varepsilon \le \phi \le \psi$ on $\partial \Omega$.
By definition  
$$
U_\psi - \varepsilon \le U_\phi \le  U_\phi^* \le U_\psi^* \quad \text{on} \, \, \Omega,
$$
reducing the problem to the case when $\phi$ is $\mathcal C^2$ on $\partial \Omega$.
 In this case we can  extend $\phi$ as a $ \mathcal C^2$ function in a neighborhood of $\overline{\Omega}$. 
Choosing $A > 1$ large enough, we can assume that
 $v_{\phi} := A \rho + \phi$ is plurisubharmonic and $\mathcal C^2$ in a neighborhood of  $\overline{\Omega}$ and 
 $v_{\phi} = \phi$ on $\partial \Omega$. Hence $v_\phi \le U_\phi$. 
Similarly we choose  $B > 1$ large enough so that   $ w_\phi := - B \rho + \phi$ is a  $\mathcal C^2$ plurisuperharmonic function 
in a neighborhood of $\overline{\Omega}$ such that $w_{\phi} = \phi$ on $\partial \Omega$.
It follows from the maximum principle  that $v_\phi \le U_\phi \le U_\phi^*  \le w_\phi$ on $\Omega$.
 This implies that $U^*_\phi$ is a subsolution hence $U_\phi^*\le U_\phi$ and finally $U_\phi=U_\phi^* \in \mathrm{PSH}(\Omega)$. At the same time, we get that $U_\phi$ satisfies \eqref{eq:MaxSubSolphi}. The claim and lemma follow.
\end{proof}

\subsection{Existence of a solution}

We now show that $U_{\phi,\mu}$ is a solution of the Dirichlet problem (\ref{eq:DirPb})
using  Lemma \ref{lem:MaxSubSol} and a balayage process in $\Omega_{\reg}$,
adapting the proofs of \cite[Theorem 8.3]{BT76} and \cite[Theorem 5.16]{GZbook}.   

\begin{thm}  \label{thm:contsoldir}
In Setting~\ref{set Dir}, we have $\mathcal{S}_{\phi,\mu} (\Omega) \neq \emptyset$ and its upper envelope 
\begin{equation} \label{eq:envelope}
U:=  \sup  \{v  \, ; \, v \in \mathcal{S}_{\phi,\mu} (\Omega)\} 
\end{equation}
satisfies $U\in  \mathrm{PSH}(\Omega)\cap \mathcal C(\Omega)$. Moreover, $U$ admits a continuous extension to $\overline \Omega$ and it solves the Dirichlet problem \eqref{eq:DirPb}.
\end{thm}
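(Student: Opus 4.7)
My plan is to realize $U$ as the upper envelope of $\mathcal{S}_{\phi,\mu}$, show via Bedford--Taylor balayage on the regular locus that it solves (\ref{eq:DirPb}), and upgrade to global continuity by approximation. First I would establish $\mathcal{S}_{\phi,\mu}\neq\emptyset$ by producing a continuous subsolution $v_0$ with $v_0|_{\partial\Omega}=\phi$: truncating $f$ to $f_k:=\min(f,k)$, an explicit barrier of the form $A_k\rho+\tilde\phi$ (with $\tilde\phi$ a continuous psh extension of $\phi$, $\rho$ the strictly psh exhaustion, and $A_k$ large enough to absorb $f_k$) is a continuous subsolution for the truncated datum; a limiting/gluing procedure based on Ko\l odziej's $L^p$ theory applied in local Euclidean charts of $\Omega_{\reg}$ then delivers $v_0$ for the full density. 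Lemma~\ref{lem:MaxSubSol} immediately yields $U\in\mathcal{S}_{\phi,\mu}$, bounded psh on $\Omega$, with $\lim_{z\to\zeta}U(z)=\phi(\zeta)$ on $\partial\Omega$.

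\medskip

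For any Euclidean coordinate ball $B\Subset\Omega_{\reg}$, Ko\l odziej's $L^p$ Dirichlet theorem in $B\subset\C^n$ produces $U_B\in\psh(B)\cap C^0(\overline B)$ with $(dd^cU_B)^n=\mu|_B$ and $U_B|_{\partial B}=U|_{\partial B}$, where the bounded upper semicontinuous trace of $U$ on $\partial B$ is approximated from above by continuous functions if needed. Proposition~\ref{prop:comparisonpple} applied with $u=U_B$, $v=U$ yields $U\le U_B$ on $B$, and the Bedford--Taylor gluing $U':=U_B$ on $B$, $U':=U$ off $B$ is a new element of $\mathcal{S}_{\phi,\mu}$; maximality of $U$ forces $U=U_B$ on $B$. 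Hence $U\in C^0(\Omega_{\reg})$ and $(dd^cU)^n=\mu$ on $\Omega_{\reg}$. Since $U$ is bounded psh and $\mu=f\beta^n$ is absolutely continuous with respect to $\beta^n$, both sides put zero mass on the proper analytic set $\Omega_{\sing}$, and the identity $(dd^cU)^n=\mu$ extends to all of $\Omega$.

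\medskip

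Boundary continuity of $U$ on $\partial\Omega$ is part of Lemma~\ref{lem:MaxSubSol}, and balayage gives continuity on $\Omega_{\reg}$. For continuity at singular interior points I would proceed by approximation: pick $f_k\in C^0(\overline\Omega)$, $f_k\ge 0$, with $f_k\to f$ in $L^p(\Omega)$, and let $U_k$ be the Perron envelope solution with data $(\phi,f_k\beta^n)$ obtained by running the previous two steps with $f$ replaced by $f_k$. For bounded continuous densities, a careful local sub-/super-barrier construction near each singular point, combining a local embedding into $\C^N$, Ko\l odziej's theorem and Demailly's characterization of plurisubharmonicity on locally irreducible spaces, should yield $U_k\in C^0(\overline\Omega)$. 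Proposition~\ref{prop:stability} applied symmetrically to $U$ and $U_k$, both of which have boundary datum $\phi$ and $L^p$ Monge--Amp\`ere densities with uniformly bounded norms, then gives
\[
\|U-U_k\|_{L^\infty(\Omega)}\le C\,\|U-U_k\|_{L^1(\Omega)}^{\gamma}\longrightarrow 0,
\]
using the standard $L^1$ convergence $U_k\to U$ that follows from the uniform $L^\infty$ bound on $\{U_k\}$ combined with stability propagated from $\|f_k-f\|_p\to 0$. Hence $U$ is a uniform limit of continuous functions, and therefore continuous on $\overline\Omega$.

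\medskip

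The main obstacle is the continuity of $U_k$ at singular points when $f_k$ is merely continuous: in the non-isolated singular case one cannot simply take a small Stein neighborhood $V$ with $\partial V\subset\Omega_{\reg}$ and invoke the local Dirichlet theory there, so the barrier has to be patched together from ambient Ko\l odziej solutions in Euclidean charts around singular points and promoted to genuine global subsolutions using local irreducibility and Demailly's extension theorem, adapting the classical argument of \cite[Theorem 8.3]{BT76}.
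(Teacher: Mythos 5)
Your overall architecture (Perron envelope of subsolutions, balayage by Bedford--Taylor/Ko\l odziej solutions on coordinate balls in $\Omega_{\reg}$, truncation of the density, and the stability estimate of Proposition~\ref{prop:stability} to pass to the limit) matches the paper's proof. But the decisive step --- continuity of the envelope \emph{across} $\Omega_{\sing}$ --- is exactly the point you leave open, and your proposed substitute does not work. You suggest building local sub-/super-barriers near singular points by ``Ko\l odziej's theorem applied in local Euclidean charts'' around those points; there are no Euclidean charts at a singular point, and an ambient Ko\l odziej solution on a ball of $\C^N$ solves the $N$-dimensional Monge--Amp\`ere equation, which neither restricts to nor controls the $n$-dimensional equation on the analytic subset $j(U)\subset\C^N$. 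The paper's mechanism is global, not local: for smooth boundary data and bounded density, the smooth subsolution $v_\phi=A\rho+\phi$ satisfies $v_\phi\le U$ on $\Omega$ and $v_\phi=\phi=U$ on $\partial\Omega$, so the function equal to $U$ on $\Omega$ and to $v_\phi$ outside glues to a plurisubharmonic function $V$ on a Stein neighborhood $\Omega'$ of $\overline\Omega$; the Fornaess--Narasimhan approximation theorem \cite[Theorem 5.5]{FN} then writes $V$ as a decreasing limit of \emph{continuous} psh functions $V_j$ on $\Omega'$, and Proposition~\ref{prop:stability} combined with Dini's lemma on $\partial\Omega$ upgrades the decreasing convergence $V_j\downarrow U$ to uniform convergence on $\overline\Omega$. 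This Stein extension plus global approximation is the idea your argument is missing, and without it continuity at (possibly non-isolated) singular points is not established.

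A secondary issue: in your final step you approximate $f$ by continuous $f_k\to f$ in $L^p$ without monotonicity and invoke ``standard $L^1$ convergence $U_k\to U$'' to feed into the stability estimate; but Proposition~\ref{prop:stability} only converts $L^1$ closeness into $L^\infty$ closeness, so you cannot use it to \emph{produce} the $L^1$ convergence --- that reasoning is circular as written. The paper sidesteps this by truncating monotonically ($f_j=\min(f,j)\uparrow f$), so that the $U_j$ decrease, the limit $V$ is identified with $U$ via the comparison principle and maximality, and $\|U_j-U\|_{L^1}\to 0$ follows from monotone convergence. I would recommend adopting both the monotone truncation and, above all, the Stein-extension/Fornaess--Narasimhan step.
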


When $f \equiv 0$, a similar result has been obtained by Wikstr\"om \cite[Theorem 1.8]{W09}.

\begin{proof} 
We proceed in several steps.

\smallskip

\noindent {\it  Step 1: Assume first
that $\phi \in \mathcal C^2 (\partial \Omega)$ and $\mu = f \beta^n$ with  $f \in L^{\infty} (\Omega)$.}
We claim that there exists a subsolution  $v$  plurisubharmonic and smooth in a neighborhood of  $\overline{\Omega}$ such that $v = \phi$ on $\partial \Omega$.
  Indeed  consider any smooth extension of  $\phi$ to   a neighborhood of $\overline \Omega$. 
  For $A > 1$ large enough, the function $v_\phi = v_\phi^A := A \rho + \phi$ is plurisubharmonic and smooth
   in a neighborhood of  $\overline{\Omega}$ and satisfies   $(dd^c v_\phi)^n \ge f \beta^n$ in $\Omega$,
   since $f$ is bounded from above.
   Since  $v_\phi = \phi$ on $\partial \Omega$
we can apply  Lemma \ref{lem:MaxSubSol}  
to conclude that  $(dd^c U)^n \ge \mu$ on $\Omega$ and  
$$
\lim_{z \to \zeta} U (z) = \phi (\zeta) \quad \text{for any} \, \, \zeta \in \partial \Omega.
$$

The identity  $(dd^c U)^n = \mu$ in $\Omega $ can be obtained by a balayage argument exactly as in the proof of Lemma~\ref{lem:DP0}. 
%

\smallskip

To show the continuity of $U$ on $\overline{\Omega}$ we first observe that $U$ can be extended as a plurisubharmonic function
 on a neighborhood $\Omega'$ of $\overline{\Omega}$. Indeed since $v_{\phi}$ is plurisubharmonic on a neighborhood $\Omega'$ of $\overline{\Omega}$ 
 and $v_{\phi} \le U$ on $\Omega$ with $v_\phi = \phi = U$ on $\partial \Omega$,
  the function $V$ defined by $V = U$ on $\Omega$ and $V := v_\phi$ on $\Omega' \setminus \Omega$ is plurisubharmonic on $\Omega'$. We can always assume that $\Omega'$ is  a Stein space. By Fornaess-Narasimhan approximation Theorem \cite[Theorem 5.5]{FN} there exists a decreasing sequence $(V_j)$ of smooth plurisubharmonic functions on $\Omega'$ which converges to $V$ pointwise on $\Omega'$.
Consider now   $v_j := V_j$ on $\overline{\Omega}$. Then $v_j \in \mathrm{PSH} (\Omega) \cap C^0 (\overline{\Omega})$ and  $(v_j)$ decreases to $U$ on $\overline{\Omega}$. In particular, $v_j \to U$ in $L^1$ by the monotone convergence theorem.  It follows from Proposition \ref{prop:stability} that
\begin{equation} \label{eq:tsability2)}
\sup_{\Omega} (v_j - U) \le \sup_{\partial \Omega} (v_j - U) + C \Vert v_j - U\Vert_1^\gamma,
\end{equation}
where $0 < \gamma < 1 \slash (n q + 1)$. 
Since $(v_j)$ decreases to $U = \phi$ on $\partial \Omega$,
 Dini's lemma ensures that the convergence is uniform on $\partial \Omega$. 
 It follows therefore from (\ref{eq:tsability2)}) 
 that $(v_j)$ uniformly converges  to $U$ on $\overline{\Omega}$, hence $U$ is continuous on $\overline{ \Omega}$.

\medskip

\noindent {\it Step 2 : Assume $\phi \in C^0 (\partial \Omega)$ and $\mu = f \beta^n$ with  $f \in L^{\infty} (\Omega)$.}
Take a decreasing sequence of smooth functions $(\phi_j)_{j \in \N^*}$ converging to $\phi$ on $\partial \Omega$ so that $\phi_j -  1 \slash j \le \phi \le \phi_j$ on $\partial \Omega$ for any $j \ge 1$.    Then from the definitions we see that
$$
 U_{\phi_j,\mu} - 1 \slash j \le  U_{\phi,\mu} \le  U_{\phi_j,\mu}, \,  \, \, \text{on} \, \, \, \overline{\Omega}.
$$
  This implies that the sequence $(U_{\phi_j,\mu})$ uniformly converges  to $ U_{\phi,\mu}$ on $\overline \Omega$.  Hence $ U = U_{\phi,\mu}$ is continuous  on $\overline \Omega$, $U= \phi$ on $\partial \Omega$  and satisfies the equation
 $ (dd^c U)^n = \mu$ on $\Omega$ thanks to \cite[Corollary~3.6]{DemaillyBook}.

\medskip
 
\noindent {\it Step 3 :  Assume $\phi \in C^0 (\partial \Omega)$ and $\mu = f \beta^n$ with  $f \in L^{p} (\Omega)$, with $p>1$}.
 Define $f_j := \min \{f,j\}$ for $j \in \N$. The bounded densities $(f_j)$ increase   to $f$ in $L^p (\Omega)$.
  Set $U_j :=U_{\phi,\mu_j}$ where $\mu_j := f_j \beta^n$ for $j\in \N$.
By the comparison principle Proposition \ref{prop:comparisonpple},   $(U_j)_{j \in \N}$ is a decreasing sequence of bounded plurisubharmonic functions on $\Omega$ that are continuous up to the boundary. 

We first show that the $U_j$'s are uniformly bounded.
Let $v := U_{\phi,{\bf 0}}$ denote the maximal plurisubharmonic function on $\Omega$ with boundary value $\phi$
(whose properties have been established the proof of Lemma~\ref{lem:MaxSubSol}). 
The comparison principle Proposition \ref{prop:comparisonpple} ensures that
 $U_j \le v$ on $\Omega$.  It follows from  Proposition \ref{prop:stability} that for all $j \in \N$,
$$
\Vert v - U_j \Vert_{L^{\infty}} \le C \Vert v - U_j\Vert_{L^1}^\gamma,
$$ 
where $0 < \gamma < 1$ and  $C > 0$ is a uniform constant depending only on a uniform bound of $\Vert f_j\Vert_p \le \Vert f\Vert_p$.
This implies in particular that    $\Vert v - U_j \Vert_{L^{\infty}}^{1 - \gamma} \le C'$, 
where $C' > 0$ is a uniform constant, hence $\sup_j \Vert U_j \Vert_{L^{\infty}} < + \infty$.

The sequence   $(U_j)_{j \in \N}$ of continuous functions on $\overline \Omega$ therefore decreases to a bounded function $V$ such that $V|_\Omega$ is plurisubharmonic and $V|_{\partial \Omega} = \phi$ since $U_j|_{\partial \Omega} = \phi$ for all $j$. 
 Moreover $(dd^c V)^n = \mu$, as follows from the continuity property of the complex Monge-Amp\`ere operator
along monotone sequences, cf \cite[Theorem~3.18]{GZbook}.
Thus $V$ is a subsolution to (\ref{eq:DirPb}).
By Lemma \ref{lem:MaxSubSol}, the function $U = U_{\mu,\phi}$ is a subsolution  to (\ref{eq:DirPb}).
 Since  $U$ is the maximal subsolution we have $V \le  U$.
On the other hand $(dd^c U)^n \ge (dd^c V)^n$ so
Proposition \ref{prop:comparisonpple} ensures $U \le V$ on $\Omega$. 
Therefore $U = V$ and $(dd^c U)^n=\mu$. In particular, $U_j$ converges to $U$ in $L^1$ by the monotone convergence theorem. 

The continuity of $U$  again follows from the stability estimate (Proposition \ref{prop:stability}) : 
there exists a constant $C > 0$ such that for all $j \in \N$
$$
\sup_{\Omega} (U_j - U) \le  C \Vert U_j - U\Vert_1^\gamma,
$$
showing that $(U_j)$  uniformly  converges to $U$ on $\overline{\Omega}$, hence $U$ is continuous on $\overline{\Omega}$ 
and solves the Dirichlet problem (\ref{eq:DirPb}).
\end{proof}

 \section{Continuity of K\"ahler-Einstein potentials} \label{sec:KE}

 \subsection{Generalities}

 Let $X$ be a normal compact space. We say that $X$ is Kähler if it admits a Kähler form in the following sense 
 
  \begin{defi}
 \label{defi kahler}
  A Kähler form $\omega$ on a normal complex space $X$ can be defined as a Kähler form on $X_{\rm reg}$ which extends to a smooth, closed $(1,1)$-form under local embeddings $X\underset{\rm loc}{\hookrightarrow} \mathbb C^N$. 
 In particular, one can cover $X$ with open subsets $U_\alpha$, find embeddings $j_\alpha:U_\alpha \hookrightarrow \mathbb C^N$ as well as smooth, strictly psh functions $\varphi_{\alpha}$ defined on a neighborhood of $j_{\alpha}(U_\alpha)$ such that $\omega|_{U_\alpha}=j_\alpha^*dd^c\varphi_\alpha$. 
 \end{defi}
 
 The study of complex Monge-Amp\`ere equations in this context has been initiated
 in \cite{EGZ09}, providing a way of
 constructing singular K\"ahler-Einstein metrics and extending Yau's fundamental solution to the Calabi conjecture \cite{Yau78}. More precisely, it is proven there that given a Kähler metric $\omega$ on $X$ and a non-negative function $f\in L^p(X)$ for some $p>1$ satisfying $\int_X f\om^n = \int \om^n$, then the equation
 \begin{equation}
 \label{MA sing}
 (\omega+dd^c \varphi^n)=f\cdot \omega^n
 \end{equation}
 has a unique solution $\varphi \in \mathrm{PSH}(X,\omega) \cap L^{\infty}(X)$ such that $\sup_X \varphi =0$.\\
 
 Let us now explain the relation between the equation \eqref{MA sing} above and the existence of singular Kähler-Einstein metrics.
 
  We choose a pair $(X,D)$ consisting of an $n$-dimensional compact K\"ahler variety $X$ and a divisor $D=\sum a_i D_i$ with $a_i \in [0,1] \cap \mathbb Q$. We assume that there exists an integer $m\ge 1$ such that $m(K_X+D)$ is a line bundle.

 Given a hermitian metric $h$ on $K_X+D$ and the singular metric $e^{-\phi_D}$ (only unique up to a positive multiple), one construct a measure $\mu_{(X,D),h}$ on $X$ as follows. If $U$ is any open set where $m(K_X+D)$ admits a trivialization $\sigma$ on $U_{\rm reg}$, then the expression $$\frac{i^{n^2}(\sigma \wedge \bar \sigma)^{\frac 1m}}{|\sigma|_{h^{\otimes m}}^{\frac 2m}}e^{-\phi_D}$$
defines a measure on $U_{\rm reg}$ which is independent of $m$ as well as the choice of $\sigma$ and can thus be patched to a measure on $X_{\rm reg}$. Its extension by $0$ on $X_{\rm sing}$ is by definition $\mu_{(X,D),h}$. We recall the following properties satisfied by the measure $\mu:=\mu_{(X,D),h}$, cf \cite[Lemma~6.4]{EGZ09}.
\begin{itemize}
\item The Ricci curvature of $\mu$ on $X_{\rm reg}$ is equal to $-i\Theta(h)+[D]$.
\item The mass $\int_X d\mu$ is finite if and only if $(X,D)$ has klt singularities. 
\item If $\mu$ has finite mass, then the density $f$ of $\mu$ wrt $\om^n$ (i.e. $\mu=f\cdot \om^n$) satisfies $f\in L^p(X)$ for some $p>1$. 
\end{itemize}

\medskip
From now on, we work in the following 

\begin{set}
\label{set pairs}
Let $(X,D)$ be a pair where $X$ is a compact normal Kähler space and $D$ is an effective $\mathbb Q$-divisor. Assume that $(X,D)$ has klt singularities, pick a Kähler metric $\omega$ and a hermitian metric $h$ on $K_X+D$, normalized so that $\int_X d\mu_{(X,D),h}=\int_X\om^n$. We assume either
\begin{itemize}
\item[$\bullet$]  $K_X+D$ is ample  and $\omega = i\Theta(h)$; \, or
\item[$\bullet$] $K_X+D \equiv 0$ and $h$ satisfies $i\Theta(h)=0$; \, or 
\item[$\bullet$]  $K_X+D$ is anti-ample  and $\omega = -i\Theta(h)$.
\end{itemize}
\end{set}

\begin{defi}
\label{def KE}
 In the Setting~\ref{set pairs} above, a Kähler-Einstein metric is a solution $\om_{\rm KE}:=\omega+dd^c \varphi_{\rm KE}$ of the Monge-Ampère equation
\begin{equation}
\label{KE sing}
 (\omega+dd^c \f_{\rm KE})^n =e^{-\lambda \f_{\rm KE}} \mu_{(X,D),h}
 \end{equation}
where $\lambda = -1,0,1$ according to whether we are in the first, second of third case.
 It  satisfies
 \begin{equation}
 \label{KE sing 2}
 {\rm Ric}(\omega_{\rm KE})=\lambda \omega_{\rm KE}+[D]
 \end{equation}  in the weak sense.
 \end{defi}
 
By the results \cite{EGZ09} recalled above, \eqref{KE sing} admits a unique solution $\om_{\rm KE}$ whenever $\lambda \in \{-1,0\}$. Its potential $\varphi_{\rm KE}$ is globally bounded on $X$ and $\om_{\rm KE}$ is a honest Kähler-Einstein metric on $X_{\rm reg} \setminus \mathrm{Supp}(D)$ and it has cone singularities along $D$ generically \cite{G2}. 
For $\lambda=+1$ we refer the reader to \cite{BBEGZ,Bouck18}.\\

 Understanding the asymptotic behavior of $\f_{\rm KE}$ (resp. $\omega_{\rm KE}$) near $X_{\rm sing}$ is a major open problem.
 A  breakthrough has been obtained in \cite{HS17} for special type of Calabi-Yau varieties,
 but  a lack of local models prevent us from a good understanding in more general situations.
 We discuss below two different tools that allow one to show -in many contexts- global continuity of the 
 K\"ahler-Einstein potential $\f_{\rm KE}$, a statement that was overlooked in \cite{EGZ11}, cf \cite{EGZ17}.

 \subsection{Isolated singularities}
 
 
 \begin{thm} \label{thm:isolatedKE}
Let $(X,D)$ be a pair as in Setting~\ref{set pairs}. Then the potential $\f_{\rm KE}$ of any Kähler-Einstein metric in the sense of Definition~\ref{def KE}  is continuous near an isolated singularity of $X$.
 \end{thm}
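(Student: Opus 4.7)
My strategy is to localize the Monge-Amp\`ere equation around the isolated singularity $x_0 \in X$ and reduce it to an instance of the Dirichlet problem solved in Theorem~\ref{thm:contsoldir}. Since $x_0$ is isolated in $X_{\mathrm{sing}}$ and $X$ is locally Stein, I may choose a small strongly pseudoconvex open neighborhood $\Omega \Subset X$ of $x_0$ with $\overline{\Omega} \cap X_{\mathrm{sing}} = \{x_0\}$. After shrinking, I may write $\omega = dd^c \psi$ on a neighborhood $\Omega'$ of $\overline{\Omega}$, where $\psi$ is smooth and strictly plurisubharmonic, and I fix a smooth positive $(1,1)$-form $\beta$ on $\Omega'$ via a local embedding $j : \Omega' \hookrightarrow \mathbb{C}^N$ by setting $\beta := j^* dd^c \|z\|^2$.

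I then set $u := \psi + \varphi_{\mathrm{KE}}$ on $\Omega$. Since $\varphi_{\mathrm{KE}}$ is globally bounded on $X$ by \cite{EGZ09}, $u$ is bounded and psh on $\Omega \setminus \{x_0\}$; it therefore extends to an element of $\mathrm{PSH}(\Omega) \cap L^{\infty}(\Omega)$ by the Demailly extension result recalled in Section~2. The K\"ahler-Einstein equation \eqref{KE sing} rewrites on $\Omega$ as
\[
(dd^c u)^n \;=\; e^{-\lambda \varphi_{\mathrm{KE}}}\, \mu_{(X,D),h} \;=\; f\, \beta^n,
\]
where $f := e^{-\lambda \varphi_{\mathrm{KE}}} \cdot g$ and $g$ is the density of $\mu_{(X,D),h}$ with respect to $\beta^n$. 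The klt assumption gives $g \in L^p(\Omega)$ for some $p > 1$ (third bullet preceding Setting~\ref{set pairs}), and $\varphi_{\mathrm{KE}}$ is bounded, so $f \in L^p(\Omega)$.

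Because $\partial \Omega \subset X_{\mathrm{reg}}$, the restriction $\varphi_{\mathrm{KE}}|_{\partial \Omega}$ is continuous by the classical Ko\l odziej continuity theorem applied locally on $X_{\mathrm{reg}}$. In particular $\phi := u|_{\partial \Omega}$ is continuous on $\partial \Omega$, so Theorem~\ref{thm:contsoldir} produces a unique $U \in \mathrm{PSH}(\Omega) \cap \mathcal{C}^0(\overline{\Omega})$ solving $(dd^c U)^n = f\beta^n$ with $U|_{\partial \Omega} = \phi$. Applying the comparison principle (Proposition~\ref{prop:comparisonpple}) twice to the pair $(u,U)$, whose Monge-Amp\`ere measures and boundary values both coincide, I obtain $u = U$ on $\Omega$. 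Hence $u$ is continuous up to $x_0$, and so is $\varphi_{\mathrm{KE}} = u - \psi$.

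The main subtle point is the continuity of $\varphi_{\mathrm{KE}}$ along $\partial \Omega$ when $x_0 \in \mathrm{Supp}(D)$: near the divisor one cannot invoke smoothness since the K\"ahler-Einstein metric has cone singularities, but only continuity of the \emph{potential} is needed, and this follows from Ko\l odziej's $L^\infty$-stability estimate applied locally on $X_{\mathrm{reg}}$. Once this is in hand, the remainder of the argument is a clean reduction to the local Dirichlet theory provided by Theorem~A.
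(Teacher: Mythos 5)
Your proof is correct and follows essentially the same route as the paper: localize to a strongly pseudoconvex neighborhood of the isolated singular point, rewrite the K\"ahler--Einstein equation as a Dirichlet problem with $L^p$ density and continuous boundary data, invoke Theorem~\ref{thm:contsoldir}, and identify the K\"ahler--Einstein potential with the continuous solution via the comparison principle. The only cosmetic difference is that the paper cites \cite{EGZ17} for the continuity of $\f_{\rm KE}$ on the punctured neighborhood (hence on $\partial\Omega$), where you invoke Ko\l odziej's interior continuity result instead; both are legitimate.
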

 
 \begin{proof}
We work near  an isolated singular point $a$.
 We let $B$ denote a small strictly pseudoconvex neighborhood of 
 $a$ in $X$, isomorphic to the trace of a ball in some local embedding in $\C^N$,
 and let $\rho$ denote a local smooth potential for $\omega=dd^c \rho$ in $B$.
 
 Recall from \cite{EGZ17} that the K\"ahler-Einstein potential 
 $\f_{\rm KE}$ is continuous in $B \setminus \{a\}$.
We can apply the local theory developed in Section~\ref{sec:Dircont}
to obtain that $\psi=\rho+\f_{\rm KE}$ is the unique solution of the Dirichlet problem
$$
(dd^c \p)^n=e^{-\lambda \f_{\rm KE}} \mu_{(X,D),h} \,\, \text{ in } B,
\; \; \text{ with } \; \; 
\p_{|\partial B}=(\rho+\f_{\rm KE})_{|\partial B}.
$$
Recall that $\mu_{(X,D),h}$ has an $L^p$ density wrt $\omega^n$ since $(X,D)$ is klt. It follows therefore from Theorem \ref{thm:contsoldir} that $\psi$ 
 is continuous, hence so is $\f_{\rm KE}$ at point $a$.
 \end{proof}

  \subsection{Hodge classes vs transcendental classes}
  \label{sec NS}
  Let $X$ be a compact Kähler space. 
  In $H^2(X,\mathbb R)$, an important subgroup is made up by the first Chern class of line bundles $L$ on $X$ via the map 
  \[ c_1:H^1(X,\mathcal O_X^*) \to H^2(X,\mathbb R)\]
   induced by the exponential exact sequence
   \begin{equation}
   \label{exp ES}
   0\longrightarrow \underline{\mathbb Z} \longrightarrow \mathcal O_X \overset{e^{2\pi i\cdot}}{\longrightarrow}  \mathcal O_X^{*}\longrightarrow 0
   \end{equation}
   and the sheaf injection $\underline{\mathbb Z} \hookrightarrow \underline{\mathbb R}$. 
   
   \begin{defi}
   The group $\mathrm{Im}(c_1) \subset  H^2(X,\mathbb R)$ is called Néron-Severi group of $X$ and denoted by $\mathrm{NS}(X)$. One defines the vector spaces
   $\mathrm{NS}_{\mathbb Q}(X):=\mathrm{NS}(X) \otimes \mathbb Q \subset H^2(X,\mathbb R)$ and  $\mathrm{NS}_{\mathbb R}(X):=\mathrm{NS}(X) \otimes \mathbb R \subset H^2(X,\mathbb R)$. 
   \end{defi}

 A Kähler metric $\omega$ (cf Definition~\ref{defi kahler}) is canonically attached to an element in $H^0(X,\mathcal C^{\infty}_X/\mathrm{PH}_X)$ where $\mathcal C^{\infty}_X$ (resp. $\mathrm{PH}_X$) is the subsheaf of continous functions on $X$ that are local restrictions of smooth functions (resp. pluriharmonic functions) under local embeddings $X\underset{\rm loc}{\hookrightarrow} \mathbb C^N$. It can be proved that a pluriharmonic function is locally the real part of a holomorphic function, \cite[\S 4.6.1]{BEG}. The exact sequences 
 
 \[   0\longrightarrow \mathrm{PH}_X \longrightarrow \mathcal C^{\infty}_X  \longrightarrow \mathcal C^{\infty}_X/\mathrm{PH}_X \longrightarrow 0\]
 
 \[   0\longrightarrow \underline{ \mathbb R} \longrightarrow \mathcal O_X \overset{\mathrm{Im}(\cdot)}{\longrightarrow} \mathrm{PH}_X \longrightarrow  0\]
 yield a map
 \[H^0(X,\mathcal C^{\infty}_X/\mathrm{PH}_X) \overset{[\cdot]}{\longrightarrow} H^1(X,\mathrm{PH}_X)\overset{\beta}{\longrightarrow} H^2(X,\mathbb R)\]

\begin{defi}
A class $\alpha \in H^1(X, \mathrm {PH}_X)$ is called Kähler if there exists a Kähler metric $\omega$ on $X$ such that $\alpha=[\omega]$. The Kähler cone of $X$ is the set $\mathcal K_X\subset H^1(X,\mathrm{PH}_X)$ made out of Kähler classes. It is an open, convex set. 
\end{defi}

\begin{rem}
\label{ES}
When $X$ has rational singularities, it is proved in \cite[Remark~3.2 (2)]{GK20} that $\beta$ is an injection and that we have an exact sequence
\[0\longrightarrow H^1(X,\mathrm{PH}_X) \overset{\beta}{\longrightarrow} H^2(X,\mathbb R) \longrightarrow  H^2(X, \mathcal O_X) \longrightarrow 0.\] 
In particular, under those assumptions, one can alternatively view the Kähler cone $\mathcal K_X \subset H^2(X,\mathbb R)$ without any ambiguity.
\end{rem}

Finally, the logarithm map yields an exact sequence
\[  0\longrightarrow \underline{ \mathbb R/\mathbb Z} \overset{e^{2\pi i \cdot}}{\longrightarrow} \mathcal O_X^* \overset{\log |\cdot|}{\longrightarrow} \mathrm{PH}_X \longrightarrow  0\]
hence a map
$$H^1(X,\mathcal O_X^*) \overset{\gamma}{\longrightarrow} H^1(X,\mathrm{PH}_X)$$
such that $\beta \circ \gamma = c_1$. 

\begin{defi}
 One set  $\widetilde{\mathrm{NS}}(X):=\mathrm{Im}(\gamma) \subset  H^1(X,\mathrm{PH}_X)$. One also defines the vector spaces
   $\widetilde{\mathrm{NS}}_{\mathbb Q}(X):=\widetilde{\mathrm{NS}}(X) \otimes \mathbb Q \subset H^1(X,\mathrm{PH}_X)$ and  $\widetilde{\mathrm{NS}}_{\mathbb R}(X):=\widetilde{\mathrm{NS}}(X) \otimes \mathbb R \subset H^1(X,\mathrm{PH}_X)$.
\end{defi}

The relation between the two groups introduced is that 
\begin{equation}
\label{NS}
\mathrm{NS}(X)=\beta(\widetilde{\mathrm{NS}}(X))
\end{equation}
 and when $X$ has rational singularities, then Remark~\ref{ES} implies that $\beta$ induces an isomorphism between the two groups and, in particular, between their tensorization by $\mathbb Q$ or $\mathbb R$. 
 
 A generalization of Kodaira theorem shows that a line bundle $L\to X$ on a normal compact space $X$ such that $\gamma(L)\in \mathcal K_X$ is automatically ample, cf e.g. \cite[Proposition~5.12]{EGZ09}. In particular, there exists an embedding $f:X\hookrightarrow \mathbb P^N$ such that $f^*\mathcal O_{\mathbb P^N}(1)=L$ and $[f^*\omega_{\rm FS}]=\gamma(L)$. 
 
 One can slightly extend the result above by asserting that if $\widetilde{\mathrm{NS}}_{\mathbb R}(X) \cap \mathcal K_X \neq \emptyset$, then $X$ is projective. Indeed,  $\mathcal K_X$ is open and $\widetilde{\mathrm{NS}}_{\mathbb Q}(X)$ is dense in $\widetilde{\mathrm{NS}}_{\mathbb R}(X)$, so $\widetilde{\mathrm{NS}}_{\mathbb Q}(X) \cap \mathcal K_X \neq \emptyset$. Since $\mathcal K_X$ is a cone, we further deduce $\widetilde{\mathrm{NS}}(X) \cap \mathcal K_X \neq \emptyset$ and we can apply the result above. 
 
  
  \medskip 
  
 Coming back to Equation~\eqref{MA sing}, it is proved in \cite{CGZ13}  that if $[\omega] \in  \widetilde{\mathrm{NS}}_{\mathbb Q}(X)$, then the potential solution $\f$ can be approximated by a decreasing 
 sequence of smooth $\omega$-psh functions, hence \cite[Theorem 2.1]{EGZ09} ensures that $\f$ is globally continuous on $X$. 

In particular, the previous discussion applies to show that if $(X,D)$ is a klt pair such that $K_X+D$ has a sign, then a solution $\f_{\rm KE}$ of \eqref{KE sing} is continuous on $X$ as soon as $\lambda \neq 0$. That is, singular Kähler-Einstein metrics $\omega_{\rm KE}$ with positive or negative Ricci curvature have continuous potentials. \\
 
 The remaining case of interest is when the curvature is zero (i.e. $\lambda =0$) and $[\omega]$ is an arbitrary Kähler class in $H^1(X,\mathrm{PH}_X)$. The following results deals with the case $[\omega]\in \widetilde{\mathrm{NS}}_{\mathbb R}(X)$. 
 
 \begin{thm}
 \label{thm NS}
 Let $(X, \omega)$ be a compact normal Kähler space such that $[\omega] \in \widetilde{\mathrm{NS}}_{\mathbb R}(X) \subset H^1(X,\mathrm{PH}_X)$. Let $f\ge 0$ be a function in $L^p(X)$ for some $p>1$ such that $\int_X f\om^n=\int_X\om^n$. Let $\varphi\in \mathrm{PSH}(X,\om) \cap L^{\infty}(X)$ be the unique solution of the equation 
 $$(\om+dd^c\varphi)^n =f\omega^n$$
 such that $\sup_X \varphi =0$. Then $\varphi$ is continuous on $X$. 
 \end{thm}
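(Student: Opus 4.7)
The plan is to reduce the real-class case to the rational-class case (already settled by \cite{CGZ13} combined with \cite[Theorem~2.1]{EGZ09}) by approximating $[\omega]$ with Kähler classes in $\widetilde{\mathrm{NS}}_{\mathbb Q}(X)$, and then to transfer continuity through uniform convergence obtained from a stability estimate adapted to a varying reference form.

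First I would approximate $\omega$ smoothly by Kähler forms $\omega_k$ whose classes lie in $\widetilde{\mathrm{NS}}_{\mathbb Q}(X)$. Writing $[\omega]=\sum_{i=1}^r\lambda_i\gamma(L_i)$ with $\lambda_i\in\mathbb R$ and $L_i$ holomorphic line bundles, and choosing smooth Hermitian metrics on each $L_i$, produces smooth closed $(1,1)$-forms $\theta_i$ representing $\gamma(L_i)\in H^1(X,\mathrm{PH}_X)$. Since $[\omega-\sum\lambda_i\theta_i]=0$ in $H^1(X,\mathrm{PH}_X)$, the long exact sequence attached to $0\to\mathrm{PH}_X\to\mathcal C^\infty_X\to \mathcal C^\infty_X/\mathrm{PH}_X\to 0$ yields a smooth function $u$ on $X$ with $\omega=\sum\lambda_i\theta_i+dd^c u$. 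Pick rationals $\lambda_i^{(k)}\to\lambda_i$ and set $\omega_k:=\sum\lambda_i^{(k)}\theta_i+dd^c u$; then $[\omega_k]\in\widetilde{\mathrm{NS}}_{\mathbb Q}(X)$, $\omega_k\to\omega$ in $\mathcal C^\infty$, and $(1-\delta_k)\omega\le\omega_k\le(1+\delta_k)\omega$ for some $\delta_k\to 0$.

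Next, choose $f_k:=c_k f\cdot(\omega^n/\omega_k^n)$ with $c_k>0$ such that $\int_X f_k\omega_k^n=\int_X\omega_k^n$; then $c_k\to 1$ and $\sup_k\|f_k\|_{L^p}<\infty$. Let $\varphi_k\in\mathrm{PSH}(X,\omega_k)\cap L^\infty(X)$, normalized by $\sup_X\varphi_k=0$, be the solution of $(\omega_k+dd^c\varphi_k)^n=f_k\omega_k^n$. Since $[\omega_k]\in\widetilde{\mathrm{NS}}_{\mathbb Q}(X)$, the result recalled from \cite{CGZ13} combined with \cite[Theorem~2.1]{EGZ09} gives $\varphi_k\in\mathcal C^0(X)$. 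To compare $\varphi_k$ with $\varphi$, observe that $\omega_k\le(1+\delta_k)\omega$ makes $(1+\delta_k)^{-1}\varphi_k$ an $\omega$-psh function, while $\omega_k\ge(1-\delta_k)\omega$ makes $(1-\delta_k)\varphi$ an $\omega_k$-psh function. Plugging these rescaled candidates into the respective Monge--Ampère equations yields Monge--Ampère inequalities whose densities differ from the true ones by factors $(1\pm\delta_k)^n\to 1$ and by $L^p$-quantities tending to zero. Combined with the global $L^p$-stability estimate \cite[Proposition~3.3]{EGZ09}, this gives $\|\varphi_k-\varphi\|_{L^\infty(X)}\to 0$, and continuity of $\varphi$ follows as a uniform limit of continuous functions.

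The hard part will be the last step. Because $\omega_k$ and $\omega$ represent different cohomology classes, the clean trick of writing $\omega_k=\omega+dd^c\eta_k$ for a smooth $\eta_k$ is unavailable, so one must rescale the candidate potentials to transfer between the $\omega$-psh and $\omega_k$-psh settings and carefully absorb the remainders coming from $\omega_k-\omega$ via the comparison principle and the stability machinery. Once this moving-reference uniform convergence is established, the theorem follows immediately.
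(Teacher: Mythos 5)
Your approximation scheme (rational classes $[\omega_k]\in\widetilde{\mathrm{NS}}_{\mathbb Q}(X)$ with $\omega_k\to\omega$ smoothly and $(1-\delta_k)\omega\le\omega_k\le(1+\delta_k)\omega$) is exactly the paper's starting point, and the continuity of the approximating potentials via \cite{CGZ13} and \cite[Theorem~2.1]{EGZ09} is used there too. The gap is in the step you yourself flag as ``the hard part''. Your rescaled candidates $(1+\delta_k)^{-1}\varphi_k$ and $(1-\delta_k)\varphi$ only satisfy Monge--Amp\`ere \emph{inequalities} of the form $(\om+dd^c u)^n\ge (1-\eta_k)f\om^n$ with $\eta_k\to 0$. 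On a compact space the comparison principle cannot upgrade ``subsolution up to a multiplicative factor $<1$'' to a pointwise inequality against the normalized solution: applying Proposition~\ref{prop:comparisonpple} on the sets $\{\varphi<u_k-\epsilon\}$ just returns $(1-\eta_k)\int f\om^n\le\int f\om^n$, which is vacuous. The stability estimate \cite[Proposition~3.3]{EGZ09} that you then invoke bounds $\sup_X(\psi-\varphi)$ by $C\Vert(\psi-\varphi)_+\Vert_{L^1}^\gamma$; it \emph{presupposes} that the $L^1$-norm of the difference is small, which is precisely what remains to be proved. One could try to close this by a compactness argument in $L^1$ plus identification of the cluster values (using semicontinuity of the Monge--Amp\`ere operator under $L^1$-limits with densities bounded below, and uniqueness of normalized solutions), but none of that is in your write-up, and it is a genuinely nontrivial additional layer.

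The paper avoids all of this with one idea you are missing: it first proves continuity for the \emph{exponentially twisted} equation $(\om+dd^c u)^n=e^{u}f\om^n$. For that equation the comparison principle does dominate subsolutions by solutions pointwise with no normalization ambiguity, and the explicit rescalings $v_\ep=(1-\ep)u+n\log(1-\ep)-\ep\Vert u\Vert_\infty$ and $w_\ep=\frac{1}{1+\ep}u_\ep-n\log(1+\ep)-\frac{\ep}{1+\ep}\Vert u_\ep\Vert_\infty$ are honest subsolutions of the $\ep$-equation and of the limit equation respectively, giving the two-sided bound $\Vert u-u_\ep\Vert_\infty=O(\ep)$ directly --- no stability estimate, no $L^1$-convergence argument. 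Theorem~\ref{thm NS} then follows by observing that $\varphi$ solves the twisted equation with density $fe^{-\varphi}\in L^p(X)$. I would encourage you either to adopt this twist or to supply in full the missing $L^1$-convergence and limit-identification argument; as it stands the proposal does not constitute a proof.
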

 
 Generalizing the celebrated Beauville-Bogomolov decomposition, it was recently proved in \cite{BGL} that compact Kähler spaces $X$ with klt singularities such that $K_X$ is numerically trivial admit a finite cover $X'\to X$, unramified in codimension one and such that $X'=T\times \prod Y_i \times \prod Z_j$ where $T$ is a (smooth) torus, $Y_i$ are irreducible Calabi-Yau varieties and $Z_j$ are irreducible holomorphic symplectic varieties, cf. also \cite{Dru16,GGK, HP,CGGN} for related anterior results. We will focus on the irreducible Calabi-Yau factors, whose definition we now recall.
 
 \begin{defi}
 \label{def ICY}
Let $X$ be a compact, normal Kähler space of dimension $n\ge 3$. We say that $X$ is  an irreducible Calabi-Yau variety if $X$ has canonical singularities, trivial canonical bundle and is such that for any finite quasi-étale cover $X'\to X$, we have $h^0(X',\Omega_{X'}^{[p]})=0$ for any $1\le p \le n -1$. 
\end{defi}
Since there are no reflexive two-forms on an irreducible Calabi-Yau variety $X$, it is automatically projective. From Theorem~\ref{thm NS}, we deduce the following
 \begin{cor}
 \label{cor CY}
 Let $X$ be an irreducible Calabi-Yau variety and let $\alpha \in \mathcal K_X$ be a Kähler class. Then, the unique singular Ricci-flat metric $\omega \in \alpha$ has continuous potentials on $X$. 
 \end{cor}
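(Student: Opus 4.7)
The plan is to deduce the statement directly from Theorem~\ref{thm NS} by verifying that every Kähler class $\alpha$ on an irreducible Calabi-Yau variety $X$ automatically lies in $\widetilde{\mathrm{NS}}_{\mathbb R}(X)$. Once this is secured, the Ricci-flat Monge-Ampère equation---whose right-hand side carries an $L^p$ density for some $p>1$, by the klt assumption---fits into the scope of that theorem and continuity of the potential follows at once.

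First I would exploit that $X$ is projective (as noted right after Definition~\ref{def ICY}) and that its canonical singularities are in particular rational. Remark~\ref{ES} then yields the exact sequence
\[ 0 \longrightarrow H^1(X, \mathrm{PH}_X) \overset{\beta}{\longrightarrow} H^2(X, \mathbb R) \longrightarrow H^2(X, \cO_X) \longrightarrow 0, \]
and, combined with identity \eqref{NS}, identifies $\widetilde{\mathrm{NS}}_{\mathbb R}(X)$ with $\mathrm{NS}_{\mathbb R}(X) \subset H^2(X, \mathbb R)$ under $\beta$. It therefore suffices to establish the vanishing $H^2(X, \cO_X) = 0$: this will make $\beta$ an isomorphism, while the long exact sequence attached to the exponential sequence \eqref{exp ES} forces $c_1: H^1(X, \cO_X^*) \to H^2(X, \mathbb Z)$ to be surjective. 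Since the image of $H^2(X, \mathbb Z)$ in $H^2(X, \mathbb R)$ is a lattice of full rank, this would yield $\mathrm{NS}_{\mathbb R}(X) = H^2(X, \mathbb R)$, equivalently $\widetilde{\mathrm{NS}}_{\mathbb R}(X) = H^1(X, \mathrm{PH}_X)$, which automatically contains $\alpha$.

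The hard part is precisely the cohomological vanishing $H^2(X, \cO_X) = 0$. I would pick a resolution $\pi: \tilde X \to X$: by rational singularities, $H^2(X, \cO_X) \cong H^2(\tilde X, \cO_{\tilde X})$, and since $\tilde X$ is smooth projective, Hodge symmetry identifies the latter with $\overline{H^0(\tilde X, \Omega_{\tilde X}^2)}$. The extension theorem of Greb-Kebekus-Kovács-Peternell for reflexive differentials on klt varieties provides $\pi_* \Omega_{\tilde X}^2 \cong \Omega_X^{[2]}$, whence $H^0(\tilde X, \Omega_{\tilde X}^2) \cong H^0(X, \Omega_X^{[2]})$. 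This last space vanishes by Definition~\ref{def ICY} applied to $X' = X$ and $p = 2$, using the standing hypothesis $n \ge 3$ built into the definition.

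With the inclusion $\alpha \in \widetilde{\mathrm{NS}}_{\mathbb R}(X)$ in hand, the proof wraps up quickly: since $K_X$ is trivial and $X$ is klt, the canonical measure $\mu_X$ attached to a nowhere vanishing holomorphic section of $K_X$ has an $L^p$ density $f$ with respect to $\omega^n$ for some $p > 1$; after the normalization $\int_X f \, \omega^n = \int_X \omega^n$, the Ricci-flat potential $\varphi_{\rm KE}$ is the unique normalized solution of $(\omega + dd^c \varphi_{\rm KE})^n = f \, \omega^n$, and Theorem~\ref{thm NS} supplies its continuity on $X$.
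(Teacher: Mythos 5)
Your proposal is correct and follows essentially the same route as the paper: reduce to Theorem~\ref{thm NS} by showing $\widetilde{\mathrm{NS}}_{\mathbb R}(X)=H^1(X,\mathrm{PH}_X)$, which is obtained from the vanishing $h^2(X,\cO_X)=h^0(X,\Omega_X^{[2]})=0$ via a resolution, Hodge symmetry, and the extension theorem for reflexive differentials, together with the exponential sequence. The only cosmetic differences are that the paper also records the vanishing for $p=1$ (giving $c_1$ as an isomorphism rather than just a surjection) and cites a slightly different source for $p_*\Omega_{\widetilde X}^p=\Omega_X^{[p]}$.
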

 
 \begin{proof}[Proof of Corollary~\ref{cor CY}]
Let $p:\widetilde X \to X$ be a resolution of singularities. Since $X$ has rational singularities and $\widetilde X$ is compact Kähler, we have for any $p\le n$
\begin{align*}
h^p(X,\mathcal O_X)=h^p(\widetilde X, \mathcal O_{\widetilde X}) 
=h^0(\widetilde X,\Omega_{\widetilde X}^{p}).
\end{align*}
By \cite{KS}, we have $p_*\Omega_{\widetilde X}^p=\Omega_X^{[p]}$, and therefore
\[h^p(X,\mathcal O_X)= h^0(X,\Omega_{X}^{[p]})=0\] for $p=1,2$. The exact sequence \eqref{exp ES} shows that the integral first Chern class yields an isomorphism $H^1(X,\mathcal O_X^*) \simeq H^2(X,\mathbb Z)$ so, in particular, one has
\begin{equation}
\label{H2}
 \mathrm{NS}_{\mathbb R}(X)=H^2(X,\mathbb R).
 \end{equation}
 By \eqref{NS} and the remark below it, if follows that $\widetilde{\mathrm{NS}}_{\mathbb R}(X)=H^1(X,\mathrm{PH}_X)$. The corollary is now proved.
\end{proof}
 
 \begin{proof}[Proof of Theorem~\ref{thm NS}]
 It follows from \cite[Theorem 4.1]{EGZ09} that there is a
  unique solution $u\in \mathrm{PSH}(X,\om) \cap L^{\infty}(X)$ of the equation
 \begin{equation}
 \label{MA exp}
 (\om+dd^c u)^n =e^u\cdot f\omega^n
 \end{equation}
We claim it is continuous. If that claim holds, we can apply it to $\varphi$ which is the solution of the equation
 $$ (\om+dd^c u)^n =e^u\cdot (fe^{-\varphi})\omega^n$$
 This is indeed legitimate since $fe^{-\varphi} \in L^p(X)$ as $\varphi$ is bounded. 
 
 We are now going to prove the claim. Since $\widetilde{\mathrm{NS}}_{\mathbb Q}(X)$ is dense is $\widetilde{\mathrm{NS}}_{\mathbb R}(X)$, one can find a family of Kähler metrics $\omega_\ep$ such that
 \begin{itemize}
 \item[$\bullet$] $ [\om_\ep] \in \widetilde{\mathrm{NS}}_{\mathbb Q}(X)$
 \item[$\bullet$] $[\om_\ep] \to [\om]$ in $H^1(X,\mathrm{PH}_X)$. 
  \item[$\bullet$] $\om_\ep \to \om$ smoothly on $X$. 
 \end{itemize}
 
 Indeed, $H^1(X,\mathrm{PH}_X)$ is a finite-dimensional vector space and it is the image of the surjective map $[\cdot]: H^0(X, \mathcal C^{\infty}_X/\mathrm{PH}_X)\to H^1(X,\mathrm{PH}_X)$. Clearly, $[\omega]$ is a non-zero element, hence one can find smooth $(1,1)$-forms $\om_1, \ldots, \om_r$ with local potentials such that $([\omega], [\omega_1], \ldots, [\omega_r])$ is a basis of $H^1(X,\mathrm{PH}_X)$. In order to uniformize the notation, we set $\om_0:=\om$. 
 
 By assumption, $[\omega]$ is a limit of rational classes. This means that one can find a $(r+1)$-tuple $\lambda_\ep=(\lambda_{i,\ep})_{0\le i \le r}$ of real numbers converging to $(1,0, \ldots, 0)$ and such that $\sum_{i=0}^r \lambda_{i,\ep} [\omega_i]$ is a rational class. If one sets $\om_\ep:=\sum_{i=0}^r \lambda_{i,\ep} \om_i$, then $\om_{\ep}$ is a Kähler metric for $\ep$ small enough and it clearly satisfies the desired conditions stated above. In particular, there is no loss of generality in assuming that we have the following set of inequalities
 \begin{equation}
 \label{control}
 (1-\ep) \om \le \om_\ep \le (1+\ep) \om
 \end{equation}
 We consider the unique function $u_\ep \in \mathrm{PSH}(X,\om) \cap L^{\infty}(X)$ solution of the equation
 \begin{equation}
 \label{MA exp ep}
 (\om_\ep+dd^c u_\ep)^n =e^{u_\ep}\cdot f\omega^n
 \end{equation}
As we explained above, $u_\ep \in \mathcal C^0(X)$ since $ [\om_\ep] \in \widetilde{\mathrm{NS}}_{\mathbb Q}(X)$. Moreover, by Jensen's inequality, if follows easily that 
\begin{equation}
\label{sup}
\sup_X u_{\ep} \le C
\end{equation}
 for some $C>0$ independent of $\ep$. Moreover, if follows from \eqref{control} that the function $v_\ep:=(1-\ep)u+n\log(1-\ep)-\ep \|u\|_{\infty}$ is $\om_\ep$-psh and satisfies
 \begin{align*}
 (\om_\ep+dd^c v_\ep)^n & \ge (1-\ep)^n (\om+dd^c u)^n \\
 & =e^{(1-\ep)u+n\log (1-\ep)+\ep u} \cdot f \om^n \\
 & \ge e^{v_\ep} \cdot f\om^n.
 \end{align*}
 This shows that $v_\ep$ is a subsolution of \eqref{MA exp ep} and we get 
 \begin{equation}
 \label{eq below}
 v_\ep = (1-\ep)u+n\log(1-\ep)-\ep \|u\|_{\infty} \le u_\ep
 \end{equation}
 In particular, we have a uniform lower bound for $u_\ep$ independent of $\ep$. Combined with \eqref{sup} we obtain, up to adjusting $C$, the following control
\begin{equation}
  \label{Linfty}
\| u_{\ep}\|_{\infty} \le C
\end{equation}
Finally, we consider $w_\ep:=\frac 1{1+\ep} u_\ep-n\log(1+\ep) - \frac{\ep}{1+\ep}\|u_\ep\|_{\infty}$. By \eqref{control}, the function $w_\ep$ is $\om$-psh and satisfies
 \begin{align*}
 (\om+dd^c w_\ep)^n & \ge (1+\ep)^{-n} (\om_\ep+dd^c u_\ep)^n \\
 & = e^{\frac{1}{1+\ep}u_\ep-n\log(1+\ep) +\frac{\ep}{1+\ep}u_\ep}\cdot f\om^n \\
 & \ge e^{w_\ep} \cdot f\om^n.
 \end{align*}
  This shows that $w_\ep$ is a subsolution of \eqref{MA exp} and we get 
 \begin{equation}
 \label{eq above}
 w_\ep = \frac 1{1+\ep} u_\ep-n\log(1+\ep) - \frac{\ep}{1+\ep}\|u_\ep\|_{\infty}\le u.
 \end{equation}
Combining \eqref{eq below}-\eqref{eq above} and \eqref{Linfty}, we see that
\begin{align*}
\|u-u_\ep\|_{\infty}  \le -n \log(1-\ep)+2\ep \max(\|u\|_{\infty}, \|u_\ep\|_{\infty}) 
=O(\ep).
\end{align*}
That is, $u$ is the uniform limit of the continuous functions $u_\ep$ when $\ep\to 0$. In particular, it is continuous. 
 \end{proof}

 \bibliographystyle{smfalpha}
\bibliography{biblio_CKE}

\end{document}